\documentclass[reqno,12pt,letterpaper]{amsart}
\usepackage[proof]{newmzmacros2}
\usepackage[all,cmtip]{xy}

\newcommand{\RR}{{\mathbb R}}

\usepackage{dsfont}
\usepackage{soul}
\usepackage{xcolor}
\usepackage{fancyvrb}
\usepackage{subcaption}
\usepackage{algorithm}
\usepackage{algorithmicx}
\usepackage{algpseudocode}

\def\blue#1{\textcolor{blue}{#1}}

\title{1D Scattering through time dependent media with memory}

\author{Jeffrey Galkowski}
\email{j.galkowski@ucl.ac.uk}
\address{Department of Mathematics, University College London, WC1H 0AY, UK}

\author{Maciej Zworski} 
\email{zworski@math.berkeley.edu}
\email{hertz@math.berkeley.edu}
\address{Department of Mathematics, University of California, Berkeley, CA 94720}

\begin{document}

\maketitle

\vspace{-0.2in}

\begin{center}
{\sc With an appendix by Zhen Huang and Maciej Zworski}
\end{center}

\vspace{-0.1in}

\begin{abstract}
We construct a scattering matrix with operator valued entries describing solutions to the 1+1 wave equation where
permittivities has memory and depends on time and space. It is the analogue of the scattering matrix 
for spatially localised perturbations where the entries are functions of frequency and appear as Fourier multipliers in solutions of the wave equation. This provides a mathematical explanation of the numerical construction in the recent paper by Horsley et al \cite{horse}. 
\end{abstract}

\section{Introduction}
\label{s:intr}

There is a considerable interest in materials whose properties depend on time and which have memory. 
Having memory essentially means having a (causal) dependence on frequency -- see for instance \cite{galif1} for a survey and 
\cite{galif2} for a recent experimental and theoretical study. Our motivation comes from
a letter \cite{horse} by Horsley, Galiffi,  and Wang and we refer to it for more references to the literature and the physics background.

\begin{figure}
\includegraphics[width=14cm]{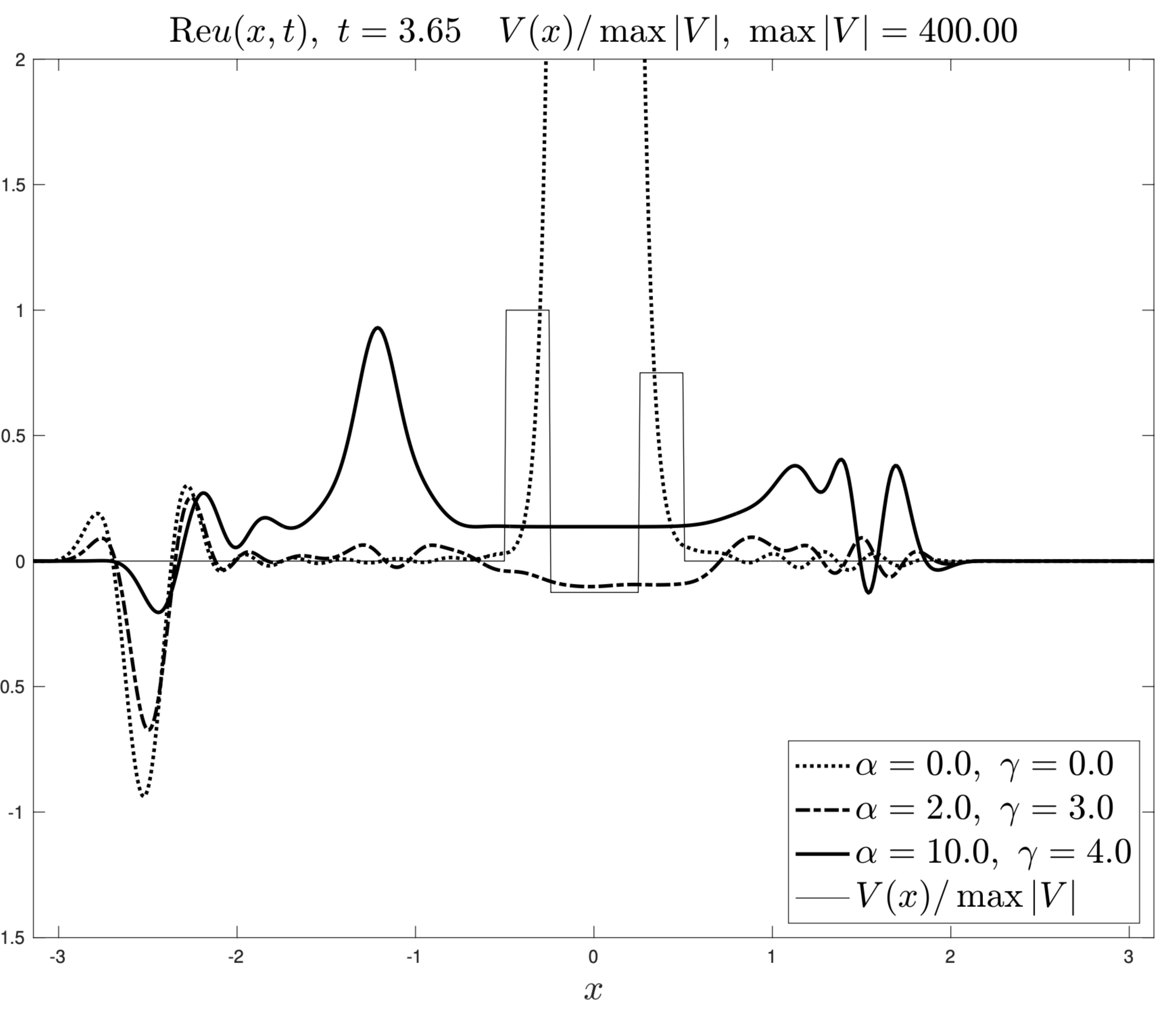}
\caption{\label{f:1} Comparisons of evolutions of a gaussian packet $ g ( t - x  ) $, $ t \ll 1 $, 
$ g (y ) = e^{-(x_0+y )^2) / \sigma - i \lambda  (y- x_0))} $ , $ \sigma = 0.05$, $ \lambda = 10$, $x_0 = - 2$ for different
values of the parameters in \eqref{eq:Bex} (with $ m = 1 $) -- this corresponds to the model considered in \cite{horse}.
An animated version is available at \url{https://math.berkeley.edu/~zworski/wave_multi_one.mp4}. The code for producing this movie and the figure is enclosed in the appendix.}
\end{figure}

\subsection{Scattering for permittivities with memory}

\begin{equation}
\label{eq:newave}
\begin{gathered}   D_t^2 u ( t, x ) - a ( x, t ) \int_{-\infty}^t e^{ - \gamma ( t - t' )}  D_t u ( t' , x ) dt' - D_x^2 u ( t, x ) = 0 ,
, \\
 a \in L^\infty ( \mathbb R_x ; C_{\rm{c}}^\infty ( \mathbb R_t ) )  , \ \  \supp a  \subset [-R, R ] \times [ -T , T ] , \ \ \  
 \gamma  > 0 , \\
u ( t , x)|_{ t \leq 0 }  = g ( x - t ) , \ \ \  \supp g \subset ( - \infty , - R ) . 
 \end{gathered}
 \end{equation}
 (The compact support in time could be relaxed to superexponential decay but we restrict ourselves to the simplest case here).
 
 At least formally this corresponds on the Fourier transform side to 
\begin{equation}
\label{eq:statwave}   
\begin{gathered}
P := D_x^2 - \omega^2 + A ( x ) , \ \\
A( x ) := a ( x, D_\omega ) \frac{ \omega }{ \omega + i \gamma } :
 H_r ( \mathbb R_\omega ) \to H_r ( \mathbb R_\omega ) ,
 \end{gathered}\end{equation}
 where
\begin{equation}
\label{eq:defHca}
H_{\alpha} := \left\{ f : 
f (  \bullet ) \in \mathscr O ( \mathbb C_+ ) , \ \ 
\sup_{\sigma > 0 }  e^{ - 2\sigma \alpha } \int_{\mathbb R } |f(\lambda + i\sigma)|^2 d\lambda  < + \infty \right \} ,
\end{equation}
and also write $H_\infty:=\cup_ \alpha H_{\alpha}.$ The Hardy spaces $ H_{\alpha} $ appear naturally 
in scattering theory for the wave equation since the work of Lax--Phillips \cite{LP} -- see \S \ref{s:revs}, and 
\eqref{eq:mapping1} specifically, for the review of the simplest case. 

The action of $a(x,D_\omega)$ is given by
\begin{equation}
\label{e:aAction}
[a(x,D_\omega)u](x,\omega+i\sigma):=  \frac{1}{2\pi} \int  \widehat{a(x,\bullet)} (\omega-\lambda)u(x,\lambda+i\sigma)d\lambda.
\end{equation}

We introduce the following Hilbert space of functions of position and frequency, $ f : \mathbb R \times \mathbb C_+ \to \mathbb C$,  $\mathbb C_+ := \{ \omega \in \mathbb C: \Im \omega > 0 \} $, 
\begin{equation}
\label{eq:defHca}
\mathscr H^{s}_{\alpha}:= \left\{ f : 
f ( x , \bullet ) \in \mathscr O ( \mathbb C_+ ) , \ \ 
\sup_{\sigma > 0 }  e^{ - 2\sigma \alpha } \int_{\mathbb R } \|f(\cdot,\lambda + i\sigma)\|_{H^s(\mathbb{R})}^2 d\lambda  < + \infty \right \} . 
\end{equation}
We write $\mathscr{H}_\alpha :=\mathscr{H}_\alpha ^0$, $\mathscr{H}_\infty=\cup_{\alpha}\mathscr{H}_\alpha$, and
$$
\mathscr{H}_{\alpha,\loc}^s:\{ u\,:\, u(x,\bullet)\in \mathscr{O}(\mathbb{C}_+)\text{ and } \chi(x) u(x,\omega)\in \mathscr{H}_{\alpha}^s\,\text{ for all }\chi\in C_c^\infty\}.
$$

The goal now is to prove the following analogue of the existence of the scattering matrix \eqref{eq:defS}: 
\begin{theo}
\label{t:1}
Suppose that assumptions in \eqref{eq:newave} hold and $ f \in  ( \omega + i )^{-\blue{1}}  H_{-R } $. Then there exist bounded operators
\[   T , R_+ :  ( \omega + i )^{-\blue{1}}  H_{\alpha} \to    \omega^{-1} H_{\alpha + 2 R}  , \ \ \ \alpha \in \mathbb R , \]
such that, for $ P $ given in \eqref{eq:statwave}, there exists a unique solution, 
$ u ( x, \omega )\in \omega^{-1}\mathscr{H}_{\infty,\loc}$, $ ( x, \omega ) \in \mathbb R^2 $,   of $ P u = 0 $ such that 
\begin{equation}
\label{eq:scat2}
u ( x, \omega ) = \left\{ \begin{array}{ll}  f ( \omega ) e^{ i \omega x } + R_+ f ( \omega ) e^{ - i \omega x }, 
& x < -R , \\ 
 T f   ( \omega ) e^{ i \omega x }  , & x > R . \end{array} \right. 
\end{equation}
\end{theo}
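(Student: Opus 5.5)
We will construct $u$ by solving a Volterra-type integral equation in $x$ with operator-valued kernel, using the free outgoing Green's function for $D_x^2-\omega^2$. Since $A(x)$ is supported in $|x|\le R$, we seek $u = f(\omega)e^{i\omega x} + v$ where $v$ is outgoing. That is, $v$ is a multiple of $e^{-i\omega x}$ for $x<-R$ and of $e^{i\omega x}$ for $x>R$. We will write this as a left-to-right propagation problem. Starting from the right, we look for $u$ with $u = F(\omega) e^{i\omega x}$ for $x>R$; the unknown $F$ will become $Tf$. It is more convenient, however, to impose the incoming data on the left.

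We use the fundamental solution $\frac{\sin(\omega (x-y))}{\omega}$ of $D_x^2-\omega^2$, whose Volterra form reads
\[ u(x) = f e^{i\omega x} + c\, e^{-i\omega x} - \int_{-\infty}^x \frac{\sin \omega(x-y)}{\omega} A(y) u(y)\, dy . \]
Here the reflected amplitude $c=R_+f$ is still unknown. We handle this by solving two Volterra problems from the left, for the initial data $e^{\pm i\omega x}$ at $x=-R$, each multiplied by an operator-valued coefficient. The equation $u = u_0 - K u$, with $K$ Volterra on $[-R,R]$, is solved by a Neumann series. The kernel $\sin\omega(x-y)/\omega$ maps $H_\alpha$ to $\omega^{-1}H_{\alpha+|x-y|}$, since $e^{\pm i\omega s}$ shifts the Hardy index by $|s|$. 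The operator $A(y)=a(y,D_\omega)\omega/(\omega+i\gamma)$ is bounded on each $H_\alpha$, because convolution in $\omega$ with $\hat a$ corresponds to multiplication by $a$ in time, and the Paley--Wiener characterization of $H_\alpha$ as Fourier transforms of $L^2$ functions supported in $t\ge -\alpha$ is preserved by multiplication by $a(y,t)$. The factor $\omega/(\omega+i\gamma)$ is bounded and holomorphic in $\mathbb C_+$. The Volterra structure gives iterate bounds $C^n(x+R)^n/n!$, so the series converges. This yields a fundamental system $u_\pm$ with $u_\pm = e^{\pm i\omega x}$ for $x<-R$ and $u_\pm = T_{\pm +} e^{i\omega x}+T_{\pm -}e^{-i\omega x}$ for $x>R$, where the $T_{\pm\pm}$ are bounded operators between appropriate $H_\alpha$ spaces with index shifts of at most $2R$.

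The condition that $u = u_+ f + u_- c$ be outgoing on the right is $T_{+-}f + T_{--}c=0$, so we must invert $T_{--}$. We expect this to be the main obstacle: in the scalar case this is the Wronskian identity. Here we instead use causality. In time, the solution with incoming data from the left can also be solved directly by Volterra iteration in $t$, since $a$ is compactly supported and the memory term is causal. Finite speed of propagation then gives existence, and $u$ vanishes for $t<0$ apart from the incoming wave. Energy estimates with exponential weights $e^{-\sigma t}$ give the Hardy space bounds after Fourier transform. Concretely, we would prove existence and uniqueness in time via Duhamel and the d'Alembert formula, treating the memory term as a bounded perturbation on finite time intervals; the estimates are uniform on $[0,T+2R]$ and the problem is free afterwards. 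Taking the Fourier–Laplace transform then gives $u\in\omega^{-1}\mathscr H_{\infty,\loc}$, and restricting to $|x|>R$ identifies $R_+f$ and $Tf$ from the free propagation. The shift $\alpha\mapsto\alpha+2R$ comes from transit across $[-R,R]$, and the loss $(\omega+i)^{-1}\to\omega^{-1}$ from the $1/\omega$ in the kernel. Uniqueness follows by transforming back: a solution of $Pu=0$ in $\omega^{-1}\mathscr H_{\infty,\loc}$ with zero incoming data corresponds to a causal solution of \eqref{eq:newave} with $g=0$, which vanishes by the Volterra uniqueness in time.
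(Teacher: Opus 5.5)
Your argument is correct in outline, but it takes a genuinely different route from the paper's. The paper stays on the stationary side:
\begin{itemize}
\item it sets $u=(1-\rho)e^{i\omega x}f+R_0(\omega)\rho_1(I+AR_0\rho)^{-1}\rho_1 F$ with $F=f[D_x^2,\rho]e^{i\omega x}$;
\item it proves that $AR_0\rho$ is compact on $\mathscr H_\alpha$ (Lemma~\ref{l:2}, where the compact time support of $a$ supplies smoothness in $\omega$);
\item it proves injectivity of $I+AR_0\rho$ by a positive commutator estimate on $\Im\omega=\sigma$ for $\sigma$ large, followed by holomorphy (Proposition~\ref{p:noker});
\item it reads off $T$ and $R_+$ from \eqref{e:T} and \eqref{e:R+}, and reduces uniqueness to the same proposition.
\end{itemize}
You instead take the Fourier--Laplace transform of the time-domain solution of Proposition~\ref{p:exu}. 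You define $R_+f$ and $Tf$ as the transforms of the profiles $G,F$ of Proposition~\ref{p:sim}. The index shift $2R$ comes from finite propagation speed: the memory is local in $x$, so $g(t-x)$ is an exact solution until the front reaches $x=-R$. For uniqueness you use Paley--Wiener (Hardy bounds on all of $\mathbb C_+$ force causality) together with uniqueness for the Cauchy problem.

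The two uniqueness mechanisms coincide. The paper's bound $\Im\langle P_\sigma u,(\omega+i\sigma)u\rangle_\sigma\le-\tfrac12\sigma^3\|u\|_\sigma^2$ is the Fourier image of the $e^{-\sigma t}$-weighted energy estimate of Lemma~\ref{l:energy}. Your route avoids compactness and Fredholm theory, and it makes Theorem~\ref{t:2} essentially a tautology. The paper's route produces stationary formulas for $T$ and $R_+$ through $(I+AR_0\rho)^{-1}$, which is what connects with the construction in \cite{horse}.

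Two points carry the weight of your argument and must be stated explicitly.

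First, exponential-weight energy estimates only give Hardy bounds for $\Im\omega\ge\sigma_0$. That is all the paper extracts from Proposition~\ref{p:exu} when proving Theorem~\ref{t:2}. The uniform bound as $\Im\omega\downarrow0$ is the actual content of the theorem. In your setting it holds because $a=0$ for $|t|>T$:
\begin{itemize}
\item after time $T$ the solution is $\phi(t-x)+\psi(t+x)$ with $\phi',\psi'\in L^2$;
\item combined with the finite-interval bound on $[-T,T]$ and the explicit incoming wave before $-T$, this gives $\chi\partial_t u\in L^2(\mathbb R_t\times\mathbb R_x)$, supported in a half-line in $t$;
\item Plancherel then gives $\chi\,\omega\hat u\in\mathscr H_\alpha$.
\end{itemize}
Bounded energy alone would not suffice; what does the job is that free 1D waves have locally $L^2$-in-time energy. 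This is also the true source of the loss $(\omega+i)^{-1}\to\omega^{-1}$: the energy controls $F'$ and $G'$, not $F$ and $G$. It is not the $1/\omega$ in your kernel.

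Second, for uniqueness, the causal $U$ produced by Paley--Wiener a priori only has $\chi\partial_tU\in L^2_{t,x}$. You therefore need a regularization before applying the energy estimate. Alternatively, run the estimate on the line $\Im\omega=\sigma\gg1$, which is exactly Proposition~\ref{p:noker}.

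Finally, the $x$-Volterra construction of $u_\pm$ and $T_{\pm\pm}$ in your first half is never used, and its integral term should carry a $+$ sign. You can drop it.
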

The operators $ R_+ $ and $ T $ were constructed numerically in \cite{horse} for the case when $ a ( x, t ) = 
V_0 \indic_{ -R, R } ( x ) \chi ( t ) $. That was done by following the construction of the scattering matrix for 
step potentials (see \S \ref{s:revs} and \cite[Exercise 2.10.6]{res}) but with the exponentials 
\[ \text{ $  e^{ \pm \omega x } $  for $ |x| > R \ \ \ $ and
$ \ \  e^{\pm i \sqrt{ \omega^2 - V_0 } x } $ for $ |x| < R $,}\]
 replaced Schr\"odinger propagators  
$ e^{ \pm i x \chi( D_\omega ) \omega/( \omega + i \gamma ) } $ for $ |x| < R$. The needed inversion of operators 
was established numerically. The point of Theorem \ref{t:1} is that, under the assumption of localisation in 
space and time, the operator exists even for a larger class of perturbations.

The proof has two parts: the first, in \S \ref{s:constr}, is a functional analytic setup for constructing $ u ( x, \omega ) $. The second one, in \S \ref{s:none}, 
is the proof of non-existence of purely outgoing solutions to $ P u = 0 $, that is solutions satisfying 
\eqref{eq:scat2} with $ f \equiv 0 $ but the terms corresponding to $ R_+ f $ and $ T f $ potentially nonzero. 

\subsection{The wave equation}

We now describe how the operators constructed in Theorem \ref{t:1} appear in the 
wave evolution.  The theorem in the exact analogue of Proposition \ref{p:1Dwave} 
in which the standard scattering matrix for compactly supported 1D potentials appears in the description of 
scattered waves. 
\begin{theo}
\label{t:2}
Suppose that $ g \in C_{\rm{c}}^\infty ( (  R, \infty ) ) $ and that $ u ( t, x ) $ is the unique solution of
\eqref{eq:newave} satisfying $ u ( t , x) = g ( t-x) $, $ t < 0 $. Then 
\begin{equation}
\label{eq:scat2wave2}
u ( t , x ) = \left\{ \begin{array}{ll}  g (   t-x ) + \mathscr R_+   g  ( x+t  ) , & x < - R , \\
\mathscr T g ( t - x ) , & x > R , \end{array} \right. \end{equation}
where 
\[    \widehat { \mathscr R_+ g } ( \omega ) :=  R_+ \widehat g ( \omega ) ,  \ \ \ \  \widehat { \mathscr T  g } ( \omega ) :=  T \widehat g ( \omega ).
\]
and $ T $ and $ R_+ $ are described in Theorem \ref{t:1}.
\end{theo}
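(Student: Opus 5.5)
We will deduce Theorem \ref{t:2} from Theorem \ref{t:1} by a Fourier–Laplace transform in $t$, following the proof of Proposition \ref{p:1Dwave}. We assume the natural reading in which the incoming wave $g(t-x)$ is supported in $x<-R$ for $t\le 0$.

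\textbf{Step 1: transform the solution.} Since $a$ is supported in $|t|\le T$ and the data is compactly supported, the solution of \eqref{eq:newave} grows at most exponentially in $t$. This follows from an energy estimate on the system obtained by adding the memory variable $v(t,x)=\int_{-\infty}^t e^{-\gamma(t-t')}D_tu(t',x)\,dt'$, which satisfies $(\partial_t+\gamma)v = D_t u$. Hence
\[ \hat u(x,\omega)=\int_{\mathbb R} e^{i\omega t}u(t,x)\,dt \]
is holomorphic in $\Im\omega>C$. The cutoff in time is harmless because $u$ vanishes for $x$ in compact sets when $t\ll 0$.

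\textbf{Step 2: check the transformed equation.} Multiplication by $a(x,t)$ becomes convolution, $a(x,D_\omega)$ in the sense of \eqref{e:aAction}. The memory term $\int_{-\infty}^t e^{-\gamma(t-t')}D_tu\,dt'$ becomes $\frac{1}{\gamma-i\omega}\,\omega\hat u=\frac{i\omega}{\omega+i\gamma}\hat u$, up to normalisation of constants. So $P\hat u=0$ with $P$ as in \eqref{eq:statwave}.

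\textbf{Step 3: identify the outside behaviour.} For $|x|>R$ the equation is the free wave equation $D_t^2u-D_x^2u=0$. Its general solution there is a sum of right- and left-moving waves. For $x<-R$ the incoming part is $g(t-x)$, whose transform is $\hat g(\omega)e^{i\omega x}$. For $x>R$ causality forbids a left-moving component: for $t\le0$ we have $u=g(t-x)$, which vanishes for $x>-R$, and finite speed of propagation rules out anything arriving from $+\infty$. So $\hat u$ has exactly the form \eqref{eq:scat2} with $f=\hat g$ and some a priori unknown coefficients.

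\textbf{Step 4: apply uniqueness.} We check that $\hat g\in(\omega+i)^{-1}H_{-R}$: it is holomorphic, and the Paley–Wiener theorem with the support of $g$ gives the required weight $e^{-\sigma R}$. We also check $\hat u\in\omega^{-1}\mathscr H_{\infty,\rm loc}$, using the exponential growth bound and $u\in H^1_{\rm loc}$. The uniqueness part of Theorem \ref{t:1} then gives that the coefficients are $R_+\hat g$ and $T\hat g$. Inverting the Fourier transform, with the contour pushed to any $\Im\omega>C$, yields \eqref{eq:scat2wave2}, where $\mathscr R_+,\mathscr T$ are defined by $R_+,T$ on the Fourier side.

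\textbf{Main obstacle.} The hard part is the a priori control in Step 1, needed to place $\hat u$ in the class where Theorem \ref{t:1} asserts uniqueness. This requires exponential-in-time energy bounds for the nonlocal-in-time equation and the matching weighted Hardy-space membership, including the $\omega^{-1}$ factor coming from $D_t u$ in the memory term. I would get these from Grönwall applied to the energy
\[ \|\partial_tu\|^2+\|\partial_xu\|^2+\|v\|^2 , \]
using $a\in L^\infty$.
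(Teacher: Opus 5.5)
Your strategy (Fourier--Laplace transform in $t$, identify $\hat u$ with the stationary solution of Theorem~\ref{t:1} by uniqueness, then invert) matches the paper's in spirit. However, there is a genuine gap: Step~4 does not go through with the tool you propose.

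Gr\"onwall applied to $\|\partial_t u\|^2+\|\partial_x u\|^2+\|v\|^2$ with $a\in L^\infty$ gives only $\|u(t)\|\lesssim e^{Ct}$. So $\hat u$ is holomorphic, with $L^2_\omega$ bounds on horizontal lines, only in $\Im\omega>C$. The same holds for the coefficients $\hat G$, $\hat F$ from Step~3. Membership in $\omega^{-1}\mathscr{H}_{\infty,\loc}$ requires holomorphy on all of $\mathbb C_+$ with bounds uniform as $\Im\omega\to0^+$. By Paley--Wiener this essentially says $\chi(x)\partial_t u\in L^2(\mathbb R_t;L^2_x)$, and an exponential growth bound cannot give that. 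Hence the uniqueness statement of Theorem~\ref{t:1}, which is formulated in that class, cannot be invoked for $\hat u$.

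The paper gets around exactly this point.
\begin{enumerate}
\item It subtracts $u_1=(1-\rho(x))g(t-x)$. Then $u_2=u-u_1$ has compactly supported forcing $f=-[D_x^2,\rho]g(t-x)$ and satisfies $u_2(t,\bullet)\in L^2_x$, so Proposition~\ref{p:exu} puts $\hat u_2$ in $L^2_{x,\omega}$ on each line $\Im\omega=\sigma\gg1$.
\item The positive commutator argument of \S\ref{s:none} is a uniqueness statement on each such single line. It gives $\hat u_2=-R_0(I+AR_0\rho)^{-1}\rho_1\hat f$ there.
\item The right-hand side is holomorphic on all of $\mathbb C_+$ with the bounds of Lemma~\ref{l:1} and \eqref{e:bounded}, so it supplies the continuation. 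The contour is then deformed, and \eqref{e:T}, \eqref{e:R+} are read off for $|x|>R_1$.
\end{enumerate}

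You can repair your argument in the same way. For $\Im\omega=\sigma\gg1$, the difference between $\hat u$ and the solution from Theorem~\ref{t:1} is outgoing, hence in $L^2_x$ on that line, so the line-by-line argument forces it to vanish. You can then invert along that line in the Fourier--Laplace sense.

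Alternatively, prove the missing bound directly. Finite propagation speed together with $a=0$ for $t>T$ means the evolution afterwards is the free 1D wave equation. By d'Alembert, $\partial_t u(\bullet,x)$ then has compact $t$-support locally uniformly in $x$, which makes $\omega\hat u$ entire and gives $\hat u\in\omega^{-1}\mathscr{H}_{\infty,\loc}$. This route uses the compact time support of $a$, not just Gr\"onwall.
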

Existence of operators $ \mathscr R_+ $ and $ \mathscr T $ (under some assumptions guaranteeing existence and
uniqueness of solutions to the wave equation) is an elementary general fact -- see Proposition \ref{p:sim}. 
The point here is these operators are related to the stationary problem from Theorem \ref{t:1} and have correct mapping properties. At this stage, we provide the codes for solving the wave equation (see the Appendix and Figure~\ref{f:1}) but not the comparison with the scattering matrix. It is an interesting open question, relevant to physics problems considered in \cite{horse} and 
references given there, to analyse quantitative properties of $ R_+ $ and $ T $ in asymptotic regimes of the parameters.
The numerical experiments (which an interested reader is invited to perform using code in the Appendix) indicate interesting phenomena which should investigated.

\smallsection{Acknowledgements} 
JG acknowledges support from EPSRC grants EP/V001760/1 and EP/V051636/1, the Leverhulme Trust under Research Project Grant  RPG-2023-325, and the ERC under the Synergy grant PSINumScat 101167139. and 
MZ from
the Simons Foundation under a ``Moir\'e Materials Magic" grant. We are also grateful to S.A.R. Horsley for 
discussions about physical motivation and to Bryn Davies for useful references. The second author would also like to thank 
H. Ammari for his 
hospitality in Zurich where he first learned about time dependent materials with memory, and 
Univerisity College, London, for providing support during his visit there.

\section{General theory} 
\label{s:gent} 

We discuss a general definition of a scattering operator for spatially localised perturbations of the 1+1 wave equation.
We then define a general class of perturbations with time dependence and memory and prove existence and
uniqueness for the corresponding wave equation. These preliminary results are not surprising 
but we not seem to have a ready to use reference covering existence and uniqueness of the Cauchy problem for operators 
described in \S \ref{s:intr} and for the more general ones given in \eqref{eq:defP}. We present a detailed argument in our specific case noting that it generalises to higher dimensions.

\subsection{An abstract scattering operator}
\label{s:genr}

The following elementary result shows that we can define the scattering matrix for very general spatially localised perturbations (see \S \ref{s:revs} for a review 
of the standard theory in our context). 

Suppose that $ P :  \mathscr D' ( \mathbb R^2 ) \to \mathscr D' ( \mathbb R^2 ) $  has the property that
\begin{equation}
\label{eq:supp} 
 u \in \mathscr D' ( \mathbb R^2 ) , \ \supp u \cap  ( \mathbb R \times [ - R , R ] )  = 
 \emptyset \ \Longrightarrow \ P u = D_x^2 u . 
 \end{equation}
More informally we can state this as 
\[  \forall \, t \in \mathbb R  \   \supp u (t, \bullet )  \cap [ - R , R ] = \emptyset \ \Longrightarrow P u ( t, x ) = D_x^2 u ( t , x ) .\] 
 We do not assume that $ P $ is linear here. For a general class of linear operators $ P$ relevant to this note, see
 \S \ref{s:class}. 
 
Under this assumption on $ P $ we have the following simple fact:
\begin{prop}
\label{p:sim}
Suppose that $ u \in \mathscr D' ( \mathbb R^2 ) $ solves $ ( D_t^2 - P ) u = 0 $ and 
\[   \begin{gathered} u|_{ t < 0 }  = \kappa_+^* g |_{ t < 0 } , \ \ \  \kappa_\pm  : \mathbb R^2 \to \mathbb R , \ \ \kappa_\pm  ( t, x ) := t \mp x , \\
g \in \mathscr D ' ( \mathbb R ) , \ \ \supp g \subset ( - \infty, - R ) . \end{gathered}  \]
Then there exist 
\[  G , F \in \mathscr D' ( \mathbb R ) , \ \ 
\supp G , \supp F \subset ( -R , \infty), 
\]
 such that
\begin{equation} 
\label{eq:upm}  u|_{ x <  - R } =  \kappa_+^* g |_{ x < -R  } + \kappa_-^* G |_{ x < -R } , \ \ \
u|_{ x > R } = \kappa_+^* F |_{x >  R } .\end{equation} 
\end{prop}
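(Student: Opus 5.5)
The plan is to work separately in the two exterior regions $\{x<-R\}$ and $\{x>R\}$, where the hypothesis \eqref{eq:supp} should reduce the equation $(D_t^2-P)u=0$ to the free wave equation. I then describe the solutions there by d'Alembert's formula and fix the profiles from the data at $t<0$. The first step is a localisation statement. Let $\chi\in C^\infty(\mathbb R)$ be supported in $(R,\infty)$ (respectively $(-\infty,-R)$). I read \eqref{eq:supp} as a local property of $P$: it should give $P u = D_x^2 u$ on the open set $\{x>R\}$ (respectively $\{x<-R\}$). Concretely, compare $Pu$ with $P(\chi u)=D_x^2(\chi u)$ on the set where $\chi\equiv 1$. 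This is the one place where the non‑linear, non‑local nature of $P$ enters, and I would argue it directly from \eqref{eq:supp} applied to cut‑offs of $u$.

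Hence $(D_t^2-D_x^2)u=0$ on $\Omega_\pm=\{\pm x>R\}$. Pass to characteristic coordinates $s=t-x$, $r=t+x$, so the equation becomes $\partial_s\partial_r u=0$. Each $\Omega_\pm$ is convex and maps onto a half‑plane in $(s,r)$. The distributional solutions of $\partial_s\partial_r u=0$ on a convex open set are exactly sums $F(s)+H(r)$ with $F,H\in\mathscr D'(\mathbb R)$, unique up to adding a constant to one and subtracting it from the other. To see this, $\partial_r u$ is independent of $s$, and a primitive in $r$ exists for distributions on such a product domain. This gives $u|_{\Omega_+}=\kappa_+^*F+\kappa_-^*H$ and $u|_{\Omega_-}=\kappa_+^*g_1+\kappa_-^*G$.

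The second step uses the data $u|_{t<0}=\kappa_+^*g$. On $\Omega_+\cap\{t<0\}$ the coordinate $r=t+x$ ranges over all of $\mathbb R$. Comparing $\kappa_+^*F+\kappa_-^*H$ with $\kappa_+^*g$ there forces $H$ to be constant, and that constant is absorbed into $F$. So $u|_{x>R}=\kappa_+^*F$, and $F$ coincides with $g$ on the range $s=t-x<-R$ covered by $\{t<0,\ x>R\}$. On $\Omega_-$, the same comparison on $\{t<0,\ x<-R\}$ identifies the right‑moving part with $g$ on all of $\mathbb R$, since there $s$ sweeps out $\mathbb R$. It also identifies $G$ as vanishing on the range of $r=t+x$ attained there, namely $(-\infty,-R)$. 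This gives $\supp G\subset[-R,\infty)$.

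The step I expect to be the main obstacle is getting the support conclusions in exactly the stated form, with open supports contained in $(-R,\infty)$ for both $G$ and $F$. For $G$, I would sharpen the closed statement to the open one using finite speed of propagation in the strip. For $F$, the support statement has to come entirely from matching the profile with the initial data on $s<-R$. So I would carefully check which values of $s$ are reached by $\{t<0,\ x>R\}$ and how $\supp g$ sits relative to them, to conclude that $F$ vanishes for $s\le -R$. The d'Alembert decomposition itself and the uniqueness modulo constants are routine.
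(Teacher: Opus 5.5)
The core of your argument is the paper's: the free wave equation on $\{|x|>R\}$, the d'Alembert decomposition in characteristic variables on each half-plane, then the data on $\{t<0\}$ to remove the left-moving part on $\{x>R\}$ and to identify the right-moving part on $\{x<-R\}$ with $g$, absorbing constants. Your derivation of the decomposition is correct and avoids the paper's restriction of $v$ to the line $L$ via wavefront sets. You apply Hörmander's Theorem 3.1.4$'$ twice, using only that slices of the half-plane $\{r-s>2R\}$ are intervals and that distributions on $\mathbb R$ have primitives. The half-plane is not a product domain, but that is not needed. The trouble is in the steps you postpone.

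First, $\supp F$. Your own matching on $\{t<0,\ x>R\}$ gives $F=g$ on $(-\infty,-R)$, while the hypothesis puts $\supp g$ inside $(-\infty,-R)$. So $F$ vanishes there only if $g\equiv 0$, and no inspection of $\supp g$ can give $\supp F\subset(-R,\infty)$. The statement has a sign slip. The paper's proof actually uses $u=0$ on $\{x>R,\ t<0\}$ (it writes $F_+(z)+G_+(w)=0$ there), i.e.\ $g=0$ on $(-\infty,-R)$, as in Theorem~\ref{t:2} where $\supp g\subset(R,\infty)$. Under that hypothesis your matching gives $F=0$ on $(-\infty,-R)$ directly.

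Second, the data only give vanishing of $F$ and $G$ on the open half-line $(-\infty,-R)$, i.e.\ supports in $[-R,\infty)$. The upgrade to $(-R,\infty)$ by finite speed of propagation in the strip is unavailable, since $P$ is constrained only by \eqref{eq:supp} and may be nonlinear, nonlocal and non-causal. The upgrade can in fact fail. Take $g=0$ and $u=\mathbf 1_{\{x<-R\}}G_0(t+x)+b$, with $G_0\in C^\infty$, $\supp G_0=[-R,\infty)$, and $0\neq b\in C_c^\infty((1,2)\times(-R,R))$. Set $Pv=D_x^2v+\langle v,\phi\rangle k$, where $k=(D_t^2-D_x^2)u$ (supported in $\mathbb R\times[-R,R]$), $\phi\in C_c^\infty(\mathbb R\times(-R,R))$ and $\langle b,\phi\rangle=1$. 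Then $(D_t^2-P)u=0$ and $u|_{t<0}=0$, but $G=G_0$. The paper does not attempt this sharpening either; its argument stops once constants are normalised, so what is really proved is the closed-half-line version.

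Finally, the localisation cannot be derived from \eqref{eq:supp} by cut-offs. That hypothesis only constrains $P$ on distributions vanishing near $\mathbb R\times[-R,R]$. For nonlocal $P$ it says nothing about $Pu$ on $\{|x|>R\}$: for instance $Pv=D_x^2v+\langle v,\phi\rangle\psi$, with $\phi$ as above and $\supp\psi\subset\{x>R\}$, satisfies \eqref{eq:supp}. The paper simply uses that $D_t^2-P$ acts as $D_t^2-D_x^2$ on $\{|x|>R\}$. You should take that as the operative hypothesis rather than try to prove it.
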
 
Less formally the hypothesis reads as 
\begin{equation}
\label{eq:u2g}   u ( t, x ) = g ( t - x ), \ \  t < 0  , \ \ \ \supp g \subset ( - \infty, - R ) , 
\end{equation}
and the conclusion as 
\begin{equation}
\label{eq:g2FG}  u ( t , x ) = \left\{ \begin{array}{ll}  g (t - x ) + G ( t + x  ) , \ \  x < - R , \ \ \supp G \subset ( - R , \infty ) , \\
F ( t - x ) , \ \ x > R,  \ \ \supp F \subset (- \infty, R  ). \end{array} \right. \end{equation}
This means that under the assumption \eqref{eq:supp} and provided we have existence and uniqueness to solutions 
of $ ( D_t ^2 - P ) u = 0 $, we have scattering maps:
\[    g \mapsto \mathscr T g := F , \ \ \  g \mapsto \mathscr R_+ g = G , \]
where $ \mathscr T $ and $ \mathscr R_+ $ are transmission and reflection maps. (We have similar definitions for waves $ g $ approaching from the right.)  As explained in \S \ref{s:intr}  this paper describes mapping properties and basic structure of $ \mathscr T $ and $ \mathscr R_+ $ for more specific perturbations and relates them to stationary scattering theory.

\begin{proof}[Proof of Proposition \ref{p:sim}]
We first proceed by pretending that $ u $ is a function and take $ r > R $. If 
we write $ z = t - x $ and $ w = x + t $ so that for $ v ( z, w ) = u ( x, t ) $ we have $ \partial_z \partial_w v = 0 $ for 
$  \pm (w - z) > 2 r $.  In particular, $ \partial_z \partial_v  v ( z, w ) = 0 $ for $  - z > 2 r - w $, that is 
$ \partial_w v ( z, w ) = f ( w ) $, $ w > 2 r +  z $ for some $ f$ defined for all values of $ w $.  But that means that
\[  v ( z , w ) = v ( z , 2r  + z ) + \int_{ 2 r +  z }^0 f ( y ) dy  + \int_0^w f( y ) dy, \   \ \text{ \ for $ w - z  >  2r $,} \]
 that is  $ v ( z , w ) = 
G_+ ( z ) + F_+ ( w ) $ for $ w - z  > 2r $. Similarly, $ v ( z , w ) = G_- ( z ) + F_- ( w ) $ for $ w- z  < - 2r $. 
This argument applies to distributions by using \cite[Theorem 3.1.4${}'$]{H1} and the fact that 
 the restriction of $ v $ to $ L := \{ ( z, 2r + z ) : z \in \mathbb R \}$ is well defined as for $  w -z > 2 R  $, 
$ \WF ( v ) \subset \{ ( z, w; \zeta , \omega ) : \zeta \omega = 0 \} $ which (away from the zero section) is disjoint from 
$ N^* L = \{ ( z, 2 r - z ,  \zeta ,  \zeta ) : z , \zeta \in \mathbb R \} $ -- see \cite[Corollary 8.2.7]{H1}. Since $ r > R $ 
is arbitrary we can replace $ r $ with $ R $ in our conclusion. 

We now need to prove that $ G_+ \equiv 0 $ and $ G_- = g $.  To see the first claim we note that \eqref{eq:u2g} gives 
$ F_+  ( z ) + G_+ ( w ) = 0 $ for $ w - z = 2 x > 2 R $ and $ w + z = 2t < 0 $. In particular, $ G_+ ' ( w ) = 0 $ for 
$ 2 R + z < w < - z $, and as $ z $ is arbitrary, this shows that $ G_+ $ is constant. We can absorb that constant into $ F_+$
and hence have $ G_+ \equiv 0 $. 

To see that $ G_- = g $, we note that \eqref{eq:u2g} gives  $ G_- ( z ) - g ( z )  + F_- ( w ) = 0 $ for $ w - z  = 2x < -2R $ and 
$ w + z  = 2t < 0 $. Hence $ G_- ' ( z ) - g' ( z ) =0 $ for $  w <  -z < -2R - w $. Since $ w $ is arbitrary, this means that
$ G_- ( z )-  g ( z ) $ is constant and we can make it $ 0 $ by changing $ F_- $. 
\end{proof}

\subsection{A class of permittivities}
\label{s:class}

The model we consider is the wave equation with permittivity which depends on time but also has memory in the sense of being an operator. In this section w do not assume spatial localisation and consider a generalisation of \eqref{eq:newave}:
\begin{equation}
\begin{gathered} 
\label{eq:defP}  
  \mathscr{P}: = \partial_t  \varepsilon  \partial_t - \partial_x^2,  \ \ \ x \in \mathbb R , \ \ \ \varepsilon v ( t, x ) := v ( t, x ) + B v ( t, x)  , \\
B v ( t, x ) := \int_{-\infty}^t B (x,  t , t - t' ) v (t' , x ) dt'. 
\end{gathered}
\end{equation}
We typically want to solve the problem 
\begin{equation}
\label{eq:inho}   \mathscr{P} v = F ( t , x ) , \ \ \ \supp F \subset ( - R , \infty ) \times \mathbb R , \ \ \  v ( t, x )|_{ t < - R  } = 0 . 
\end{equation}
We make the following assumptions on $ B $:and that there exists $ m $ and $ m_0 $ such that 
\begin{equation}
\label{eq:assB} 
\forall \, k , \ell \, \exists \, C_{ k\ell } \ \  |\partial^k_t \partial^\ell_s B ( x, t , s ) |\leq C_{k\ell} \langle t \rangle^{{m_0}} \langle s\rangle ^{m} . 
\end{equation}
Later, but not in this section, we assume that we have a localisation in space, that is,  for $ R > 0 $ independent of $ t $ and $ s $
\begin{equation}
\label{eq:suppB}
    \supp B ( \bullet, t , s ) \subset ( - R , R ) 
\end{equation}

As an example we can take 
\begin{equation}
\label{eq:Bex}   B ( x, t, s  ) = V ( x ) e^{-\alpha t^2 }  e^{- \gamma s } s^m , \ \ \  \alpha, \gamma  \geq 0, \ \  \ V \in L^\infty
( \mathbb R ) ,  
\end{equation} 
noting that the case of $ \alpha = \gamma = 0 $, $ m = 1 $,  gives $ \mathscr P = \partial_t^2 - \partial_x^2 + V ( x ) $. Another way of writing the operator
$ B $ in \eqref{eq:Bex} is as a pseudodifferential operator: 
\begin{equation}
\label{eq:FTo2t}  
 \begin{gathered}   B = b ( x, t, D_t ) , \ \ \    b ( x, t , \tau ) := V ( x ) e^{ - \alpha t^2 } i^m  ( \tau + i \gamma )^{-m} , \\
 a ( t, D_t ) h := \int_{\mathbb R } a ( t, \tau ) \widehat h ( \tau ) d \tau , \ \ \  
h ( t ) = \frac{1}{2\pi}\int_{\mathbb R } \widehat {h} ( \omega ) e^{ - i \omega t } d \omega . 
\end{gathered} \end{equation}
 We note here that our convention for Fourier transform in time is non-standard but leads to cleaner formulas in our setting. 
 
 The most relevant case for us is \eqref{eq:newave}. For that we take
 \begin{equation}
 \label{eq:ourB}
 \begin{gathered} 
 B ( x,t, s ) = \gamma^{-1} (  e^{-s \gamma } -1 ) a ( x, t ) , \ \ \gamma > 0 ,  \\ a \in L^\infty_x C^\infty_t, \ \ \supp a \subset [ - R, R ] \times [ - T , T ].  \end{gathered} 
 \end{equation}
 In that case we have 
 \[    m_0 = - \infty, \ \  m = 1 . \]
 (This means that in \eqref{eq:energy} below we can take $ \gamma ( T ) \equiv 1 $.)


Since we do not know a ready to use reference covering existence and uniqueness of the Cauchy problem for $ \mathscr P $ in \eqref{eq:defP} we present a detailed argument in our specific case noting that it generalises to higher dimensions.

\subsection{Existence and uniqueness}
\label{s:eu}

The result of \S \ref{s:genr} is applicable to operators of the form \eqref{eq:defP} thanks to the following
\begin{prop}
\label{p:exu}
Under the assumptions \eqref{eq:defP} and \eqref{eq:assB}, 
for $ F \in L^1 ( [0,T ]; L^2 ( \mathbb R ) ) $, there exists 
a unique
\begin{equation}
\label{eq:assu}  u \in C( {(-\infty,T])} ; H^1 ( \mathbb R ) ) \cap C^1 ( {(-\infty,T])} ; L^2 ( \mathbb R ) ) ,   \ \ \
\supp u\subset [0,T], 
\end{equation}
such that 
\[   \mathscr P u = F  \text{ on $  ( 0, T ) \times \mathbb R  $}  
\]
Moreover, there exists $ C_0, \lambda > 0  $ (independent of $ T $) such that for $ 0 \leq t \leq T $. 
\begin{equation}
\label{eq:energy} 
\begin{gathered} 
 \| u ( t, \bullet ) \|_{ H^1 ( \mathbb R ) } + \| \partial_t u ( t, \bullet ) \|_{L^2 ( \mathbb R) }  \leq
 C_0 
\big\|  e^{ \lambda \gamma ( T )  ( T - t' ) }  F ( t' , x ) \big\|_{ L^2 (0,T)_{t'}\times \mathbb{R}_x  ) } , \\
\gamma ( T  ) := 1+ \langle T\rangle^{{m_0}+1}+ \langle T \rangle^{{m_0}+m+1} . 
\end{gathered} 
\end{equation}
\end{prop}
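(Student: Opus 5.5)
We will prove Proposition \ref{p:exu} by an energy estimate, treating the memory term as a perturbation and closing with Gronwall, and then obtaining existence from a Picard iteration (or a Galerkin/duality argument) built on the same estimate.

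\textbf{Step 1: rewrite as a first order system.} Expanding $\partial_t(\varepsilon\partial_t u)$ and using \eqref{eq:defP}, we get
\[
\partial_t B\partial_t u (t,x)= B(x,t,0)\,\partial_t u(t,x) + \int_{-\infty}^t \big(\partial_t B + \partial_s B\big)(x,t,t-t')\,\partial_t u(t',x)\,dt' .
\]
Hence $\mathscr P u = F$ reads
\[
(1+B(x,t,0))\partial_t^2 u - \partial_x^2 u + K(\partial_t u) = F ,
\]
where $K$ is the causal integral operator above. In the case \eqref{eq:ourB} we have $B(x,t,0)=0$, so the leading coefficient is $1$. In general we would need $1+B(x,t,0)$ bounded below; in the worst case we instead keep $\partial_t(\varepsilon\partial_t u)$ and use $w:=\varepsilon\partial_t u$ as the unknown. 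We will first assume the leading coefficient is $1$. Because $\supp u\subset[0,T]$ in the past sense, $u$ vanishes for $t<0$, so all integrals run over $t'\in[0,t]$.

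\textbf{Step 2: a priori estimate.} For smooth $u$, set
\[
E(t)=\|\partial_t u(t)\|_{L^2}^2+\|\partial_x u(t)\|_{L^2}^2+\|u(t)\|_{L^2}^2 .
\]
Multiplying the equation by $\partial_t \bar u$ and integrating in $x$ gives
\[
E'(t)\le C\Big(E(t)+\|F(t)\|_{L^2}^2+\|K\partial_t u(t)\|_{L^2}^2\Big).
\]
By \eqref{eq:assB} and Cauchy--Schwarz,
\[
\|K\partial_tu(t)\|_{L^2}\le C\langle t\rangle^{m_0}\int_0^t\langle t-t'\rangle^{m}\,\|\partial_tu(t')\|_{L^2}\,dt'.
\]
The key bookkeeping is to bound $\int_0^t\langle t-t'\rangle^m dt'\lesssim \langle T\rangle^{m+1}$, and similarly for the $B(x,t,0)$ coefficient term (of size $\langle T\rangle^{m_0}$). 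This yields a Volterra--Gronwall inequality
\[
E(t)\le C\int_0^t\Big(\gamma(T)\sup_{s\le t'}E(s)+\|F(t')\|^2\Big)\,dt',
\]
with $\gamma(T)$ of the stated form. Gronwall then gives $E(t)\le C\int_0^t e^{2\lambda\gamma(T)(t-t')}\|F(t')\|^2dt'$, which is \eqref{eq:energy} (with $T$ replacing $t$ on the right). Uniqueness follows immediately by applying this estimate to the difference of two solutions, with $F=0$.

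\textbf{Step 3: existence.} We iterate
\[
\Box u_{k+1}=F-K\partial_tu_k \quad\text{(plus the lower order terms)},
\]
solving each free wave equation by d'Alembert/Duhamel in $C(H^1)\cap C^1(L^2)$. Finite propagation speed, or simply causality, preserves vanishing for $t<0$. The Volterra structure makes the $k$-th difference bounded by $(CT\gamma(T))^k/k!$, so the iteration converges in the energy space on $[0,T]$. For $F\in L^1L^2$ we first take smooth $F$ and pass to the limit using the estimate.

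\textbf{Main obstacle.} The main difficulty is justifying Step 2 for solutions of the stated regularity and correctly tracking the $T$-dependence of the memory kernel so that the exponent is $\lambda\gamma(T)(T-t')$ with $\lambda$ independent of $T$. We will handle regularity by mollifying in $x$ (the coefficients are only $L^\infty$ in $x$, but this enters only multiplicatively) and passing to the limit.
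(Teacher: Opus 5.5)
Your strategy is the paper's: an energy estimate gives uniqueness and \eqref{eq:energy}, and existence comes from a Picard iteration on Duhamel's formula for the free wave group. The differences are technical. The paper integrates an $e^{-2\lambda t}$-weighted energy identity and absorbs the memory term by taking $\lambda$ large, which is the integrated form of your Gronwall argument. It also iterates only on short intervals of length $\delta$ and then glues the pieces across the times $j\delta$. Your Volterra bound $d_{k+1}(t)\le C_T\int_0^t d_k(s)\,ds$ instead gives convergence on all of $[0,T]$ at once and avoids the gluing.

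There is, however, an error in Step 1 that you should fix. Differentiating $B\partial_t u(t,x)=\int_0^t B(x,t,t-t')\,\partial_t u(t',x)\,dt'$ gives
\[
\partial_t(B\partial_t u)=B(x,t,0)\,\partial_t u+\int_0^t(\partial_tB+\partial_sB)(x,t,t-t')\,\partial_t u(t',x)\,dt' .
\]
So $B(x,t,0)$ multiplies $\partial_t u$, not $\partial_t^2u$, and the equation reads $\partial_t^2u-\partial_x^2u+B(x,t,0)\partial_tu+K\partial_tu=F$. The principal part is always $\partial_t^2-\partial_x^2$. No lower bound on $1+B(x,t,0)$ is needed, and the fallback to $w=\varepsilon\partial_tu$ is unnecessary. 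You also must not restrict to $B(x,t,0)=0$, since the statement makes no such assumption. The term $B(x,t,0)\partial_tu$ is a first-order term with coefficient bounded by $C\langle t\rangle^{m_0}$, which is how you actually treat it in Step 2. In Step 3 it simply goes on the right-hand side together with $K\partial_tu$.

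Two smaller points. First, as written, $E'\le C(E+\|F\|^2+\|K\partial_tu\|^2)$ combined with $\|K\partial_tu(t)\|\le C\gamma(T)\sup_{s\le t}E(s)^{1/2}$ gives a Gronwall rate of order $\gamma(T)^2$. The statement claims $\lambda\gamma(T)$ with $\lambda$ independent of $T$. To get that exponent, do not square; instead bound $|\langle K\partial_tu,\partial_tu\rangle|\le\|K\partial_tu\|\,\|\partial_tu\|\le C\gamma(T)\sup_{s\le t}E(s)$. Second, \eqref{eq:energy} controls $F$ only in $L^2_t$, so it cannot be used to pass from smooth $F$ to general $F\in L^1([0,T];L^2)$. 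No approximation is needed, though: Duhamel's formula for the free wave equation accepts $L^1L^2$ forcing directly, so your iteration runs for such $F$ as it stands.
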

To obtain uniqueness and \eqref{eq:energy} we will use
\begin{lemm}
\label{l:energy}
Suppose that \eqref{eq:assu} is satisfied and that in addition,  
\[  F := \mathscr P u \in L^1 ( [ 0 , T ] ; L^2 ( \mathbb R )  ). \]
Then \eqref{eq:energy} holds.
\end{lemm}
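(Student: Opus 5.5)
We will prove \eqref{eq:energy} by a weighted energy estimate for $\mathscr P = \partial_t\varepsilon\partial_t - \partial_x^2$, treating the memory term $B$ as a perturbation that is controlled by a Gronwall argument.

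\textbf{The energy identity.} Since $\supp u \subset [0,T]$ in time, all integrals over $(-\infty,t]$ reduce to integrals over $[0,t]$. Pair $\mathscr P u = F$ with $\partial_t \bar u$ in $L^2(\mathbb R_x)$ and take real parts. Write $\varepsilon \partial_t u = \partial_t u + B\partial_t u$. The main part gives
\[ \tfrac12\tfrac{d}{dt}\big(\|\partial_t u\|_{L^2}^2+\|\partial_x u\|_{L^2}^2\big) + \Re\langle \partial_t (B\partial_t u), \partial_t u\rangle = \Re\langle F,\partial_t u\rangle . \]
Here
\[ \partial_t (B\partial_t u)(t,x) = B(x,t,0)\,\partial_t u(t,x) + \int_0^t \partial_t\big[B(x,t,t-t')\big]\,\partial_t u(t',x)\,dt' . \]
By \eqref{eq:assB}, the first term is bounded pointwise by $C\langle t\rangle^{m_0}|\partial_t u(t,x)|$. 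The derivative $\partial_t[B(x,t,t-t')]$ consists of $\partial_t B$ and $\partial_s B$ terms, both bounded by $C\langle t\rangle^{m_0}\langle t-t'\rangle^{m}$. Hence the integral term is bounded in $L^2_x$ by
\[ C\langle t\rangle^{m_0}\langle t\rangle^{m}\int_0^t\|\partial_t u(t')\|_{L^2}\,dt' . \]

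\textbf{Regularisation.} The regularity \eqref{eq:assu} does not justify differentiating the energy directly, so we mollify. Replace $u$ by $u_\delta = \rho_\delta *_x u$, mollifying in $x$ only. This commutes with $\partial_t$ and $\partial_x$. It does not commute exactly with multiplication by $B(x,\cdot)$, but $B$ is only $L^\infty$ in $x$ in general. We therefore prefer to avoid mollification and argue by time-integrating the identity in the weak form. Concretely, $\partial_t(\varepsilon\partial_t u) = F + \partial_x^2 u \in L^1 L^2 + C H^{-1}$, and $\varepsilon$ is invertible as a Volterra operator. The standard Lions--Magenes lemma then gives that $t\mapsto \|\partial_t u\|^2+\|\partial_x u\|^2$ is absolutely continuous and satisfies the identity above. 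We expect this justification to be the main technical obstacle. The cleanest route is to approximate $F$ by smooth data, note that the identity holds for smooth solutions, and pass to the limit using linearity and the estimate itself.

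\textbf{Gronwall step.} Set $E(t) = \|\partial_t u(t)\|^2 + \|\partial_x u(t)\|^2$ and $M(t)=\sup_{t'\le t}E(t')^{1/2}$. For $t\le T$, the two memory contributions are bounded by $C\langle T\rangle^{m_0}E$ and $C\langle T\rangle^{m_0+m+1}M^2$, where the extra power of $\langle T\rangle$ comes from integrating over $t'\in[0,t]$. Integrating the identity and using Cauchy--Schwarz on the $F$ term gives
\[ M(t)^2 \le C\gamma(T)\int_0^t M(s)^2\,ds + 2\int_0^t\|F(s)\|_{L^2}M(s)\,ds . \]
A Gronwall argument, applied in differential form to $M$ or with the weight $e^{-2\lambda\gamma(T)t}$, then yields
\[ E(t)^{1/2} \le C\int_0^t e^{\lambda\gamma(T)(t-s)}\|F(s)\|_{L^2}\,ds . \]
By Cauchy--Schwarz in $s$, this is bounded by the weighted $L^2_{t,x}$ norm in \eqref{eq:energy}; the factor $t^{1/2}$ is absorbed by enlarging $\lambda$, using $\gamma(T)\ge1$.

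\textbf{The $L^2$ norm of $u$.} Since $u(0)=0$, we have $\|u(t)\|_{L^2}\le\int_0^t\|\partial_t u\|_{L^2}$. This is bounded by the same right-hand side after again enlarging $\lambda$. Combining this with the bound on $E(t)^{1/2}$ gives the full $H^1$ norm and completes \eqref{eq:energy}.
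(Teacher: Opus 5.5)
Your argument is correct, and it reaches \eqref{eq:energy} by a different route from the paper. The paper integrates an exponentially weighted energy identity (weight $e^{-2\lambda t}$, density $\tfrac12(|u_t|^2+|\partial_x u|^2+|u|^2)$) over $(-\infty,T)\times\mathbb R$ in one step. It bounds the memory term in weighted space--time $L^2$ by $C\gamma(T)\|e^{-\lambda t}u_t\|$, uniformly in $\lambda$ because of causality ($e^{-\lambda(t-t')}\le 1$ for $t'\le t$), and absorbs it by taking $\lambda\gg\gamma(T)$. You instead work pointwise in time with the unweighted energy, run Gronwall on $M(t)$, and recover $\|u(t)\|_{L^2}$ from $u(0)=0$. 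The exponential weight is just the integrating factor of your Gronwall step, so the two arguments are equivalent in substance. The paper's version avoids the sup-in-time bookkeeping. Yours gives the sharper intermediate bound $E(t)^{1/2}\le C\int_0^t e^{\lambda\gamma(T)(t-s)}\|F(s)\|\,ds$, which is finite for every $F\in L^1([0,T];L^2)$; the right-hand side of \eqref{eq:energy} can be infinite for such $F$.

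Your justification paragraph needs repair. The ``cleanest route'' you propose is circular. Approximating $F$ by smooth data and passing to the limit produces \emph{some} $\tilde u$ with $\mathscr P\tilde u=F$. Identifying $\tilde u$ with the given $u$ requires uniqueness for $\mathscr P$, and Proposition~\ref{p:exu} deduces that uniqueness from this very lemma.

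The fix is already in your own computation. For $u$ as in \eqref{eq:assu}, $\partial_t(B\partial_t u)\in C([0,T];L^2)$, so $(\partial_t^2-\partial_x^2)u=G:=F-\partial_t(B\partial_t u)\in L^1([0,T];L^2)$. Once the memory term is on the right-hand side, mollifying in $x$ alone creates no commutator with $B$. The function $u_\delta=\rho_\delta*_x u$ satisfies $(\partial_t^2-\partial_x^2)u_\delta=\rho_\delta*_x G$, and $\partial_t u_\delta$ is absolutely continuous with values in $H^1$. Direct differentiation then gives $E_\delta(t)=2\Re\int_0^t\langle \rho_\delta*_x G,\partial_t u_\delta\rangle\,ds$. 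Dominated convergence as $\delta\to 0$ yields your identity for $u$. This is also what the Lions--Magenes lemma delivers, once you apply it to the free wave operator rather than to $\partial_t\varepsilon\partial_t$. The paper does not address this regularity point at all.
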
 
\begin{proof}
This is done by an adaptation of the usual energy estimate based on the  energy identity (see for instance 
\cite[(2.4.2)]{H3} for a very general version):
\begin{equation*}
\begin{split} & \partial_t (e^{ - 2 \lambda t } \tfrac12 ( | u_t |^2 + |\nabla_x u |^2 + |u|^2 ) ) -  \nabla_x \cdot ( \Re \nabla_x u \bar u_t )
+ \lambda e^{ - 2 \lambda t }  ( | u_t |^2 + |\nabla_x u |^2 + |u|^2 ) 
 \\
\ \ \ & = - 2 \Im e^{ - 2 \lambda t } \mathscr{P} u \overline {D_t u } - e^{ - 2 \lambda t } D_t B D_t u \overline{ D_t u}- {e^{-2\lambda t}\mu^2 u \overline{D_tu}}  + e^{ - 2 \lambda t }
  \Re u \bar u_t .  
\end{split} 
\end{equation*}
Suppose $u\in H^1((-\infty, T)\times \mathbb{R})$ with $\supp u\subset [0,T]$ and $\mathscr{P}u\in L^2$. Then, integrating the energy identity on $(-\infty,T)\times \mathbb{R}$, using the divergence theorem
{(in higher dimensions)}, the fact that {the form of 
$B $ in ~\eqref{eq:defP} implies $\supp BD_t u\subset ( - \infty ,T]$,} and that for $\supp v\subset [0,T]$,
\begin{align*}
&\int _{-\infty}^T \Big\|e^{-\lambda t}D_t\int_0^{t}B(x,t,t-t')e^{\lambda t'}v(t',x)dt'\Big\|_{L^2_x}^2dt\\
&\leq C\int _{0}^T \langle t\rangle ^{2{m_0}}\|v(t,\bullet)\|_{L^2_x}^2dt+ C\int_0^T\langle t\rangle^{2{m_0}}(1+\langle t\rangle^{2m}) \|v\|_{L^1((0,t);L^2_x)}^2dt\\
&\leq C \gamma ( T )^2 \|v\|_{L^2}^2,
\end{align*}
where $ \gamma ( T ) $ is defined in \eqref{eq:energy}. 
From this we obtain
\begin{align*}
&e^{-2\lambda T}\tfrac{1}{2}(\|u_t(T)\|_{L^2(\mathbb{R})}^2 +\| u ( T) \|_{H^1(\mathbb{R})}^2)+\lambda ( \|e^{-\lambda t}u\|_{L^2((-\infty,T);H^1(\mathbb{R}))}^2+ \|e^{-\lambda t}u_t\|_{L^2((-\infty,T)\times \mathbb{R})}^2) \\
&=-2\Im \langle e^{-\lambda t}\mathscr{P}u,e^{-\lambda t}D_tu\rangle_{(-\infty,T)\times \mathbb{R}}-\langle e^{-\lambda t}D_tBe^{\lambda t}e^{-\lambda t}D_t u,e^{-\lambda t}D_tu\rangle_{L^2 ((-\infty,T)\times \mathbb{R}) }\\
&{\qquad-\langle e^{-\lambda t}\mu^2 e^{\lambda t}e^{-\lambda t} u,e^{-\lambda t}D_tu\rangle_{(-\infty,T)\times \mathbb{R}}} +\Re \langle e^{-\lambda t}u,e^{-\lambda t}\partial_t u\rangle\\
&\leq C\|e^{-\lambda t}f\|_{L^2((-\infty,T)\times \mathbb{R})}^2+(C_0 \gamma ( T ) ^2 +1)\|e^{-\lambda t}D_t u\|_{L^2((-\infty,T)\times \mathbb{R})}^2
\\ &\qquad
+\tfrac{1}{2}\|e^{-\lambda t}u\|_{L^2((-\infty, T)\times \mathbb{R}) }^2.
\end{align*}

Hence, taking $\lambda$ large enough, and moving the two right-most terms to the left-hand side, we obtain~\eqref{eq:energy}
(with $ \lambda \gamma ( T )  $ replacing $ \lambda $). 
\end{proof}

\begin{proof}[Proof of Proposition \ref{p:exu}]
Uniqueness is immediate from Lemma \ref{l:energy} and \eqref{eq:energy}. It remains to show 
existence. For that we will use the free wave group, $ U ( t ) : H^{s} ( \mathbb R ) \times H^{s-1} ( \mathbb R ) \to 
H^{s} ( \mathbb R ) \times H^{s-1} ( \mathbb R ) $:
\begin{equation}
\label{eq:freeU}
\begin{split} 
U(t) & = :\begin{pmatrix} \cos (D_x t)&  {\sin(D_xt)}/{D_x}\\ -D_x\sin (D_xt)&\cos(D_xt)\end{pmatrix} \\
& = e^{t L} , \ \ \ \ \  L:=\begin{pmatrix}\ \ 0&I\\ - D_x^2&0\end{pmatrix} , \ \ \ D_x := (1/i) \partial_x .  
\end{split}
\end{equation}
We fix $T_-\geq 0$ and for 
\begin{equation}
\label{eq:propfw}  \mathbf w \in C^0((-\infty,T_-);H^1\times L^2), \ \ \ 
\supp \mathbf w\subset [0,T] \times \mathbb R ,
\end{equation} 
define  a sequence 
\begin{equation}
\label{eq:propvn}   \mathbf v_n ( t ) \in  C^0((-\infty,T);H^1\times L^2) , \ \  n = -1, 0 , \cdots , \ \ \
\supp \mathbf v_n  \subset [0,T] \times \mathbb R  , \end{equation}
inductively as follows:
$$
{\bf v}_{-1}:=\begin{cases} \mathbf w(t)&t\leq T_-, \\ \mathbf w(T_-)&T_-\leq t\leq T . \end{cases}
$$ 
Then for $n\geq 0$, we again define $ \mathbf v_n $ differently in different ranges of $ t $. For 
$ t \leq T_- $ we put
$   \mathbf v_n ( t ) := \mathbf w ( t ) $, 
while for 
$ T_- \leq t \leq T$, 
\begin{equation}
\label{eq:Duha}
{\bf v}_n(t):= U(t-T_-)\mathbf w(T_-)+\int_{T_-}^tU(t-s)\Bigg(\begin{pmatrix} 0\\F(s)\end{pmatrix} -\begin{pmatrix}0&0\\\mu^2&\partial_tB\end{pmatrix}{\bf v}_{n-1}(s)\Bigg)ds . 
\end{equation}
Then \eqref{eq:propvn} holds and 
\begin{gather*}
(\partial_t-L){\bf v}_n=\begin{pmatrix} 0\\F\end{pmatrix} -\begin{pmatrix}0&0\\\mu^2&\partial_tB\end{pmatrix}{\bf v}_{n-1},\quad t\in (T_-,T)\qquad {\bf v}_{n}(T_-)=\mathbf w(T_-). 
\end{gather*}
In particular, 
\begin{equation}
\label{eq:dtL}
\begin{gathered}
(\partial_t-L)({\bf v}_n-{\bf v}_{n-1})= -\begin{pmatrix}0&0\\\mu^2&\partial_tB\end{pmatrix}\big({\bf v}_{n-1}-{\bf v}_{n-2}\big),\\ t\in (T_-,T), \ \ \ {\bf v}_{n}(T_-)-{\bf v}_{n-1}(T_-)=0.
\end{gathered}
\end{equation}
We now observe that \eqref{eq:defP} and \eqref{eq:assB} show that for $ v \in C^0 ( ( - \infty , T ) ; L^2 )$, 
$ \supp v \subset [0, T]\times \mathbb R $ 
\begin{equation}
\label{eq:DtBv} 
\begin{split}  \|  \partial_t B v ( s ) \|_{ L^2 ( \mathbb R ) }^2 
& \leq \int_{\mathbb R } \left| \int_{0}^s \langle s \rangle^m \langle s - t' \rangle^{{m_0}} v ( t' , x ) dt' \right|^2 dx \\
& \leq \langle s \rangle^{2m  }(1+\langle s\rangle^{2{m_0}}) \| v \|_{ L^1 ( ( 0, s ) ; L^2 ) )}^2 . \end{split} \end{equation}
Since $ \mathbf v_{n-2} (t) - \mathbf v_{n-1} ( t) = 0 $ for $ t < T_- $ it follows from \eqref{eq:Duha} that
\[ \begin{split} 
 \|{\bf v}_{n} (t)-{\bf v}_{n-1}(t) \|_{H^1\times L^2} \leq  &  C\int_{T_-}^t(1+|t-s|)(\|{\bf v}_{n-1}(s)-{\bf v}_{n-2}(s) \|_{H^1\times L^2}\\
 & \ \ \ \ \ \ \ \ +
\langle s \rangle^{m}(1+\langle s\rangle^{{m_0}}) \|{\bf v}_{n-1}-{\bf v}_{n-2}\|_{L^1(T_-,s);H^1\times L^2})ds,
\end{split} \]
for some constant $ C $.

Using $T_-+\delta\leq T+1$, we see that for $ \delta < 1 $,    there is a new constant $ C_1 $ depending on {$ T $ but not on $T_-$}, 
\[ \begin{split}
& \|{\bf v}_{n}-{\bf v}_{n-1}\|_{L^\infty((T_-,T_-+\delta);H^1\times L^2)} \leq \\
& \ \ \ \ {C(1+\langle T_-+\delta\rangle^{m}(1+\langle T_-+\delta\rangle^{{m_0}})\delta)}  \|{\bf v}_{n-1}-{\bf v}_{n-2}\|_{L^1((T_-,T_-+\delta);H^1\times L^2)}
\leq \\
& \ \ \ \ C_1 \delta \|{\bf v}_{n-1}-{\bf v}_{n-2}\|_{L^\infty((T_-,T_-+\delta);H^1\times L^2)}\
\end{split} \]
Taking $\delta < 1/C_1 $, we concluded that ${\bf v}_n$ is a Cauchy sequence in
 $L^\infty((T_-,T_-+\delta);H^1\times L^2)$.  Since 
\[ {\bf v}_{n} ( t )= \mathbf w ( t) \ \text{ for } \  t \leq T_-,  \ \ \ {\bf v}_n\in C^0([T_- ,T_-+\delta] ;H^1\times L^2),
\ \ \mathbf v_n ( T_- ) = \mathbf w ( T_- ) , 
\]
${\bf v}_n$ converges to ${\bf v}\in C^0((-\infty ,T_-+\delta];H^1\times L^2)$ and 
 ${\bf v}|_{(-\infty,T_-)}= \mathbf w|_{(-\infty,T_-)}$.  
 
 In particular we have convergence $ \mathbf v_n (t ) \to \mathbf v ( t ) $ in the sense of distributions on 
 $ ( T_- , T_- + \delta ) $, and hence, from \eqref{eq:Duha}, in the sense of distributions, 
 \[    \partial_t \mathbf v  = L \mathbf v  -\begin{pmatrix}0&0\\\mu^2&\partial_tB\end{pmatrix} {\bf v} + 
 \begin{pmatrix} 0 \\ F \end{pmatrix}, \ \ \text{ on $ ( T_- , T_- + \delta ) \times \mathbb R $.} \]
Putting $ \mathbf v = [ v^1 , v^2 ]^t $,  this gives  $ \partial_t v^1 = v^2 \in C^0 ( [ T_- , T_- + \delta ] ; L^2 ) 
 $. Hence $ u := v^1 $, satisfies
 \begin{equation}
 \label{eq:propu} 
 \begin{gathered}  u \in C^0 ( ( -\infty  , T_+ + \delta ]; H^1 ) \cap C^1 ( -\infty , T_- + \delta ] ) ,  \\
 u( T_- ) = w^1 ( T_- ) ,  \ \ \ \partial_t u ( T_- ) = w^2 ( T_- ) ,   \ \ \ \mathbf w =:[ w^1  , w^2 ]^t , \\
 \mathscr P u = F  \  \text{ in the sense of distributions on $ ( T_-, T_- + \delta ) $. } 
 \end{gathered}
 \end{equation}
The only conditions here are \eqref{eq:propfw} on $ w $ and and $\delta>0$. 
 
We can now pass from this small step procedure to finding 
\[ u\in C^0((-\infty,T);H^1(\mathbb{R})) \cap  C^1((-\infty, T);L^2(\mathbb{R} )) ,\ \ \ \supp u\subset [0,T],  \]
satisfying 
$
\mathscr{P}u=F $ on  $(0,T)
$.
To do this, we set $u_0=0$ and for $j\geq 1$ and use \eqref{eq:propu} to inductively obtain 
\begin{equation}
\label{eq:uj01}  u_j\in C^0((-\infty,j\delta] ;H^1(\mathbb{R})) \cap C^1 ((-\infty, j\delta] ;L^2(\mathbb{R})) , 
\end{equation}
satisfying 
\begin{equation}
\label{eq:propuj}
\begin{gathered}
\mathscr{P}u_j= F, \ \ \text{ distributionally on  }((j-1)\delta,j\delta)\times \mathbb{R}, \\
u_j ( ( j-1 ) \delta ) = u_{j-1} ( (j-1) \delta ) , \ \ \partial_t u_j ( ( j-1 ) \delta ) = \partial_t u_{j-1} ( ( j-1 ) \delta ) . 
\end{gathered}
\end{equation}
We claim that 
\begin{equation}
\mathscr{P}u_j=F \ \ \text{ distributionally on  }(0,j\delta).
\end{equation}
Indeed, suppose by induction that this holds for $ j $ replaced by $ j-1 $. Then, \eqref{eq:uj01} holds and
$$
\mathscr{P}u_{j}=F \ \ \text{ distributionally on  }((0,(j-1)\delta)\cup((j-1)\delta, j \delta).
$$
We we write $ \mathscr P = \partial_t^2 - \mathscr L u $, $ 
\mathscr L u := \partial_x^2 u - \partial_t B \partial_t u - \mu^2 u $, then
\[ G :=  \mathscr L u \in C^0 ( ( - \infty , j \delta ) ; H^{-1} ) ,  \]
and hence, with $ H := F + G $, 
\[ \partial_t^2 u = H \in  C^0 ( ( - \infty , j \delta ) ; H^{-1} ) 
\text{ distributionally on  }((0 ,(j-1)\delta)\cup((j-1)\delta, j \delta). \]
But the continuity of $ u $ and $ \partial_t u $ across $ (j-1) \delta $ shows that the equation holds distributionally
on $ ( 0, j \delta ) $. (The argument reduces to showing that if $ g , G \in C^0 $ and  
$ \partial_t g = G $, on $ \mathbb R \setminus \{ 0 \} $ then $ \partial_t g = G $ on $ \mathbb R$.
But that follows from testing against $ \varphi ( t ) ( 1- \chi ( t/\varepsilon )) $ where $
\varphi, \chi \in C_{\rm{c}}^\infty ( \mathbb R ) $, $ \chi \equiv 1 $ near $ 0 $, and using continuity
of $ g $ and $ G $ while letting $ \varepsilon \to 0 $.)
Taking $J\geq \delta^{-1}T$ and setting $u=u_J$ completes the construction of $u$. \end{proof}

\section{An operator valued scattering matrix}
\label{s:opers}

In this section we prove Theorem \ref{t:1}. To motivate the construction and to link it to standard theory 
we first review the case compactly supported time independent potentials (the case $ \alpha = \gamma = 0 $ 
and $ m = 1 $ in \S \ref{s:class}) . The wave equation setting (unlike the more common Schr\"odinger equation
setting) makes the the scattering matrix appear in a very clean way. 
 
 \begin{figure}
\includegraphics[width=15cm]{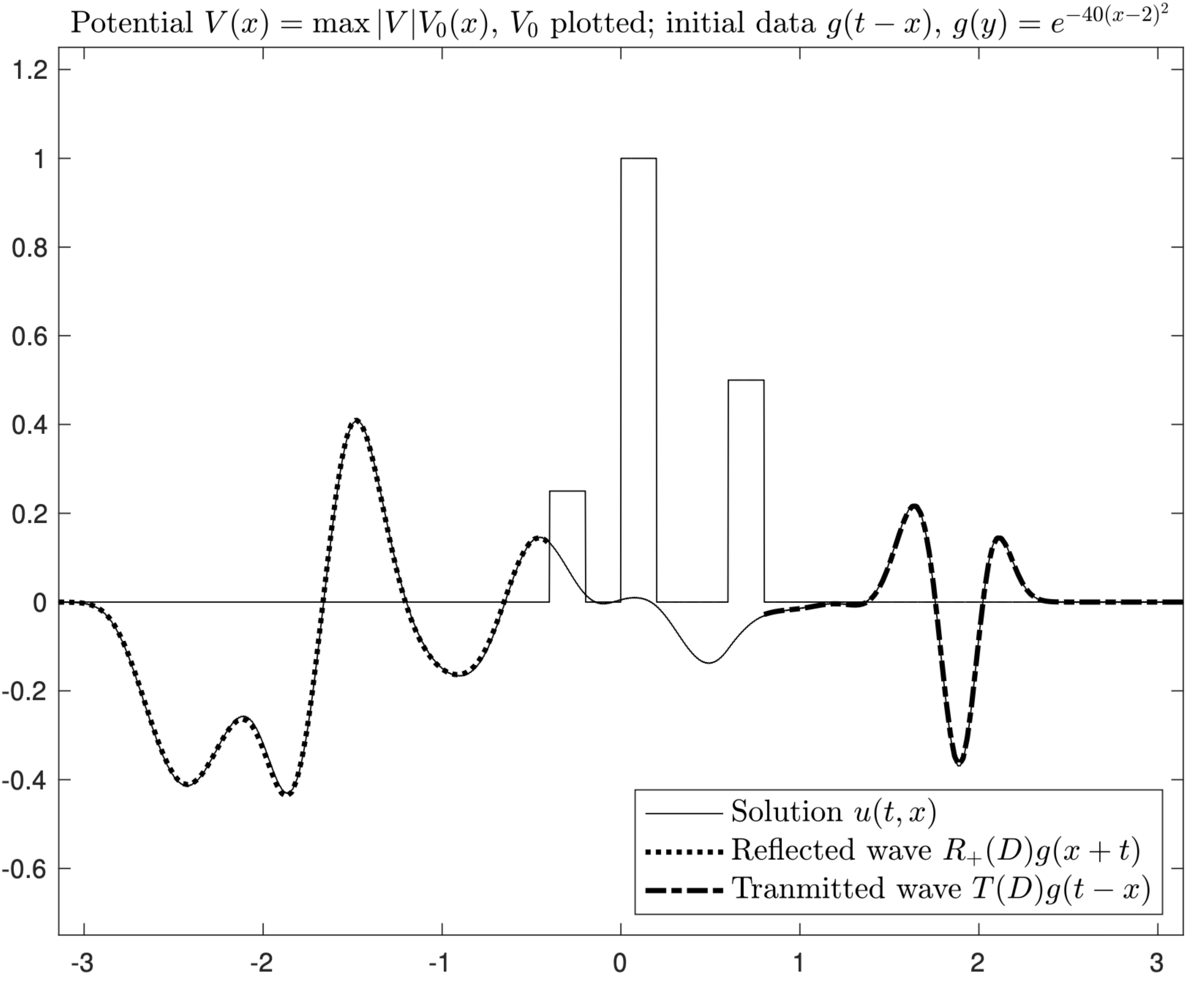}
\caption{\label{f:2} An illustration of Proposition \ref{p:1Dwave}: the solution to $ ( \partial_t^2 - \partial_x^2 + V ( x ) ) u = 0$,
$ \supp V \subset [ - R , R ] $, 
with $ u ( t, x ) = g ( t - x ) $ for $ t \ll -1 $ is given by $ g ( t - x ) + R_+ ( D) g ( t + x )$ for $ x \leq - R $ and
$ T ( D ) g ( t - x) $ for $ x \geq R $ (for all times; $ t = 4 $ shown). 
An animated version is available at \url{https://math.berkeley.edu/~zworski/wave_pot.mp4}.}
\end{figure}
 
\subsection{Review of standard scattering matrix}
\label{s:revs}
Suppose that $ V \in L^\infty ( \mathbb R; \mathbb R ) $ satisfies $ \supp V \subset [ - R , R ] $. Any solution to 
\begin{equation}
\label{eq:Schr}  ( D_x^2  +V ( x ) -\lambda^2 ) u = 0 , 
\end{equation}
 satisfies
\[  u ( x ) = \left\{ \begin{array}{ll} A_+ e^{ i \lambda x } + B_- e^{-i \lambda x } , & x > R; \\
A_- e^{ i \lambda x } + B_+ e^{ - i \lambda x }, & x < - R. \end{array} \right. \]
Taking the Wronskian of $ u $ and $ \bar u $ for $ \lambda \in \mathbb R \setminus  0 $ shows that $ |A_-|^2 + |B_-|^2 = 
| A_+|^2 + |B_+ |^2 $, and hence 
\begin{equation}
\label{eq:defS}  S ( \lambda ) : \begin{pmatrix} A_- \\ B_- \end{pmatrix} \mapsto  \begin{pmatrix} A_+\\ B_+ \end{pmatrix} , \ \ \
S ( \lambda ) = \begin{pmatrix} T ( \lambda ) & R_- ( \lambda ) \\ 
R_+ ( \lambda ) & T ( \lambda ) \end{pmatrix} , \end{equation}
is well defined. It is called the {\em scattering matrix}, $ T ( \lambda ) $ is the transmission coefficient and 
$ R_\pm  ( \lambda ) $ are the reflection coefficients. They extend to {\em meromorphic} functions in $ \mathbb C $ 
satisfying 
\begin{equation}
\label{eq:unitTR}  
\begin{gathered} T ( \lambda ) \overline {T ( \bar \lambda )} + R_\pm ( \lambda )\overline{ R_\pm ( \bar \lambda ) } = 1, \ \ \ 
T ( \lambda) \overline{ R_\pm ( \lambda) } + R_\pm ( \lambda ) \overline{ T ( \bar \lambda ) } = 0 , \\
T ( - \lambda ) = \overline{ T ( \bar \lambda ) } , \ \ \  R_\pm ( - \lambda ) = \overline{ R_\pm ( \bar \lambda ) } , 
\end{gathered} 
\end{equation}
and
\begin{equation}
\label{eq:expbound}  \prod_{j=1}^N \frac{ |\lambda - i \mu_j | } { | \lambda + i \mu_j | }  | T ( \lambda ) | \leq e^{ 2 R \Im \lambda } , \ \ \   \prod_{j=1}^N \frac{ |\lambda - i \mu_j | } { | \lambda + i \mu_j | } | R_\pm ( \lambda ) | \leq e^{ 2 R \Im \lambda } , \ \ \ \Im \lambda \geq 0 ,
\end{equation}
where $  \mu_j > 0 $ and $ - \mu_j^2 $, $ j = 1, \cdots, N $, are the eigenvalues of $ D_x^2 + V ( x ) $ -- 
see \cite[\S 2.4, Exercise 3.14.10]{res}.  The normalised eigenfunctions 
$ w_j  \in L^2 ( \mathbb R ; \mathbb R ) $ satisfy
\begin{equation}
\label{eq:nev}   ( D_x^2 + V ( x) ) w_j = - \mu_j^2 w_j , \ \  \| w_j \|_{L^2 } = 1 , \ \  w_j ( x )|_{ \pm x > R }  = a_j^\pm e^{ \mp \mu_j x }  , \ \ 
\mu_j > 0 .    
\end{equation}

When $ V \geq 0 $ there are no eigenvalues and  in that case we can consider multiplication by  $ T $ and $ R_\pm $ on 
Hardy spaces:
\begin{equation}
\label{eq:mapping1}
\begin{gathered}  H_{\alpha} := \{ f \in \mathscr O ( \{ \Im \lambda > 0 \} ) : \sup_{\sigma > 0 } 
e^{ - 2 a \sigma } \int_{\mathbb R } | f ( \lambda + i \sigma ) |^2 d \lambda < + \infty  \}, \\
H_{\alpha} \ni f (\lambda ) \mapsto T ( \lambda ) f ( \lambda ) \in H_{ a + 2 R } , \ \ \
H_{\alpha} \ni f (\lambda ) \mapsto R_\pm ( \lambda ) f ( \lambda ) \in H_{ a + 2 R } .
\end{gathered} 
\end{equation}
We remark that the spaces are isometric as $ f ( \omega ) \mapsto e^{ i a \omega } f ( \omega ) $ provides
a unitary map between $ H_{\alpha} $ and $ H $.

The relation to the wave equation is given in the following
\begin{prop}
\label{p:1Dwave}
Suppose that $ g \in \mathscr S' ( \mathbb R ) $ and that $ \supp g \subset ( R , \infty ) $. If 
\begin{equation}
\label{eq:wave1} 
( D_t^2 - D_x^2 - V ( x ) ) u ( t, x ) = 0 , \ \ t \in \mathbb R,  \ \ \  u ( t , x ) = g ( t - x  ) , \ \ t \leq 0 , \end{equation}
then, in the notation of \eqref{eq:defS} and \eqref{eq:nev},
\begin{equation}
\label{eq:scat2wave}
u ( t , x ) = \left\{ \begin{array}{ll}  g ( t -x  ) + M_- g (x+t ) , & x < - R , \\
M_+  g ( t- x ) , & x > R ,  \end{array} \right. \end{equation}
where
\[ \begin{gathered}   M_- g ( x ) = R_+ ( D ) g ( x ) + \sum_{ j=1}^N \langle g , w_j \rangle a_j^{-} e^{ \mu_j x } , \\
M_+ g ( x ) = T ( D) g ( x ) + \sum_{ j=1}^N \langle g , w_j \rangle a_j^{+} e^{ \mu_j x } , \end{gathered}  \]
and where $ T $ and $ R_+ $ are defined in \eqref{eq:defS}.
\end{prop}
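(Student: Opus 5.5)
The plan is to solve \eqref{eq:wave1} by a Fourier--Laplace transform in $t$, identify the transformed solution with the stationary generalized eigenfunction defining $T$ and $R_+$ in \eqref{eq:defS}, and then invert the transform by deforming the contour down to the real axis. The poles at $i\mu_j$ crossed during the deformation should produce the eigenfunction terms.

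\textbf{Reduction to an inhomogeneous problem.} I write $u(t,x)=g(t-x)+w(t,x)$. Since $\supp g\subset(R,\infty)$ and $\supp V\subset[-R,R]$, the function $V(x)g(t-x)$ vanishes unless $t>x+R\ge 0$. The correction therefore satisfies
\[
(D_t^2-D_x^2-V)w = V(x)\,g(t-x),
\]
with both sides vanishing for $t<0$. By Proposition \ref{p:exu} (the case $\alpha=\gamma=0$, $m=1$ of \S\ref{s:class}, first for $g\in C^\infty_{\rm c}$, then by density and duality for $g\in\mathscr S'$), $w$ exists and is unique. The energy estimate \eqref{eq:energy} shows that $w$ grows at most like $e^{Ct}$. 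Hence, with the convention \eqref{eq:FTo2t}, $\widehat w(\omega,x)$ is holomorphic in $\Im\omega>C$.

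\textbf{Identification with the stationary problem.} Transforming the equation for $w$ gives
\[
(D_x^2+V-\omega^2)\widehat w = -V(x)e^{i\omega x}\widehat g(\omega), \qquad \Im\omega\gg1 .
\]
Since $\widehat w(\bullet,x)$ is in $L^2$ there, $\widehat w=-R_V(\omega)\bigl(Ve^{i\omega x}\bigr)\widehat g(\omega)$, where $R_V(\omega)=(D_x^2+V-\omega^2)^{-1}$ is the $L^2$ resolvent. Consequently $\widehat u=\widehat g(\omega)\,e(x,\omega)$, where $e(x,\omega)=e^{i\omega x}-R_V(\omega)(Ve^{i\omega x})$ solves \eqref{eq:Schr}. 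Moreover, because $R_V(\omega)(Ve^{i\omega x})$ is outgoing (equal to $c_\pm e^{\pm i\omega x}$ for $\pm x>R$), $e$ has exactly the form $e^{i\omega x}+R_+(\omega)e^{-i\omega x}$ for $x<-R$ and $T(\omega)e^{i\omega x}$ for $x>R$. This identity holds first for $\Im\omega\gg1$ and then, by meromorphic continuation, everywhere. Inverting along $\Im\omega=C$ gives, for $x<-R$,
\[
u = g(t-x)+\frac1{2\pi}\int_{\Im\omega=C}R_+(\omega)\widehat g(\omega)e^{-i\omega(t+x)}\,d\omega ,
\]
and for $x>R$ the analogous formula with $T$.

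\textbf{Contour deformation (main obstacle).} I will move the contour from $\Im\omega=C$ to $\Im\omega=0^+$. In $\Im\omega\ge0$ the only singularities of $R_+$ and $T$ are the simple poles at $i\mu_j$, coming from the poles of $R_V$. To justify the shift:
\begin{itemize}
\item I use \eqref{eq:expbound} together with Paley--Wiener. Since $\supp g\subset(R,\infty)$, $|\widehat g(\omega)|$ decays like $e^{-R\Im\omega}$ times a polynomial. Pairing with a test function in $(t,x)$, in the regions $x<-R$ or $x>R$, supplies the remaining $e^{-\epsilon\Im\omega}$ needed to kill the horizontal sides.
\item Polynomial growth of $\widehat g$ is handled by testing against $\varphi\in C^\infty_{\rm c}$ and integrating by parts, which is where $g\in\mathscr S'$ enters.
\item The real-axis limit exists because $T$ and $R_+$ are bounded there (using $|T|^2+|R_+|^2=1$ from \eqref{eq:unitTR}, with care at $\omega=0$).
\end{itemize}

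\textbf{Residues at the eigenvalues.} The residue of $R_V(\omega)$ at $\omega=i\mu_j$ is the rank-one operator $c_j\,w_j\otimes w_j$ with $c_j=\pm i/(2\mu_j)$, using the real normalized $w_j$ of \eqref{eq:nev}. Applying it to $Ve^{i\omega x}$ and combining with $\widehat g(i\mu_j)$ produces, via $(D_x^2+V+\mu_j^2)w_j=0$, a multiple of $\langle g,w_j\rangle$ times $w_j(x)e^{\mu_j t}$. For $\pm x>R$ this equals $\langle g,w_j\rangle a_j^\pm e^{\mu_j(t\mp x)}$, which are exactly the extra terms in $M_\pm$.

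The delicate points are this residue bookkeeping (the constants and the sign conventions of \eqref{eq:FTo2t}) and the rigorous justification of the contour shift for merely tempered $g$. Proposition \ref{p:sim} serves as a consistency check: it guarantees a priori that $u$ has the form \eqref{eq:g2FG}, so only the identification of $G$ and $F$ is at stake.
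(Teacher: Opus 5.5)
Your proposal is correct, and it follows a genuinely different route from the paper. The paper expands $u$ by the spectral theorem for $D_x^2+V$. It writes the propagator through the distorted plane waves $e_\pm$ and the eigenfunctions $w_j$, and evaluates the coefficients $E_\pm g$ using the form of $e_\pm(y,\lambda)$ for $y<-R$ (where the Cauchy data live). It then cancels the cross terms with the unitarity relations \eqref{eq:unitTR}. The eigenvalue terms come straight from the discrete part of the expansion, where $\langle g',w_j\rangle=-\mu_j\langle g,w_j\rangle$ collapses $\cosh\mu_jt$ and $\sinh\mu_jt$ into $e^{\mu_jt}$.

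You instead Laplace transform in $t$, identify $\widehat u=\widehat g(\omega)e_+(x,\omega)$ via the outgoing resolvent at large $\Im\omega$, and move the contour down to the real axis. The eigenvalue terms then appear as residues at $i\mu_j$. Your route needs neither completeness of the generalized eigenfunction expansion nor the identities \eqref{eq:unitTR}. Boundedness of $T,R_+$ on a strip above the real axis, away from the $i\mu_j$, already follows from \eqref{eq:expbound}. It is also the same mechanism the paper uses in its proof of Theorem \ref{t:2}, i.e.\ the one that survives without a self-adjoint spectral theory. The price is the explicit residue computation, which the spectral expansion gives for free.

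That computation does close. The residue of $R_V$ at $i\mu_j$ is $\frac{i}{2\mu_j}\,w_j\otimes w_j$. Integration by parts using $Vw_j=w_j''-\mu_j^2w_j$ gives $\int Vw_je^{-\mu_jy}\,dy=-2\mu_ja_j^-$, since only the boundary term on the left survives. Hence $\operatorname{Res}_{i\mu_j}e_+=ia_j^-w_j$, and the shift from $\Im\omega=C$ to $\Im\omega=0^+$ contributes $a_j^-\widehat g(i\mu_j)\,e^{\mu_jt}w_j(x)$. Here $a_j^-\widehat g(i\mu_j)=\int g(-y)w_j(y)\,dy$ is the pairing of the Cauchy datum $u(0,\bullet)$ with $w_j$. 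This is how $\langle g,w_j\rangle$ must be read, and the paper's proof uses it that way too.

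There are two slips to fix.
\begin{enumerate}
\item \eqref{eq:energy} with $m_0=0$, $m=1$ has $\gamma(T)\sim T^2$ and gives only a superexponential bound. The $e^{Ct}$ growth should come instead from the elementary energy estimate for a bounded time-independent potential.
\item In the contour shift, the sides to control are the vertical segments $\Re\omega=\pm r$. These are handled by decay in $\Re\omega$ (from the test function in $t$, or from $\widehat g$ when $g\in C^\infty_{\rm c}$) together with \eqref{eq:expbound} on the strip. The factor $e^{-R\Im\omega}$ plays no role there; it is what yields the support properties in the Remark after the proposition.
\end{enumerate}
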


\noindent
{\bf Remark.} At first it is not clear that the expression in \eqref{eq:scat2wave} has the desired support properties. 
To see this when there are no eigenvalues, we first
note that by the Paley--Wiener Theorem $ \widehat g ( \lambda) $ (with the convention of \eqref{eq:FTo2t}) 
is holomorphic for $ \Im \lambda > 0 $ and
satisfies the bound $ \langle \lambda\rangle^N e^{  -(R + \delta  ) \Im \lambda } $, $ \delta > 0 $, there. In view of \eqref{eq:expbound} we then have 
\[     T ( \lambda ) \widehat g ( \lambda ) ,  \  R_+ ( \lambda ) \widehat g ( \lambda ) = \mathcal O  ( \langle \lambda \rangle^N )
e^{ (R - \delta ) \Im \lambda },  \ \ \ \Im \lambda > 0 . \]
The Paley--Wiener Theorem then shows that $ \supp T ( D ) g , \supp R_+ ( D ) g \subset ( - R ,  \infty ) $ and
\begin{gather*}   \supp \indic_{ - \infty , - R )} ( \bullet )R_+ (D ) g ( \bullet +t  ) \cap ( - \infty , R ) \subset \{ x: - t - R < x < - R \} , \\
\supp \indic_{ ( R, \infty ) } T( D ) g ( t - \bullet  ) \cap ( R , \infty ) \subset \{ R < x < t + R \}. \end{gather*} 
In particular for $ t \leq 0 $ the scattering terms both vanish. 
When there are eigenvalues, $ - \mu_j^2 $, then the entries of $ S ( \lambda ) $ have poles $ i \mu_j $, $ \mu_j > 0 $.  A contour deformation then provides the needed cancellation. 

\begin{proof}
We use the spectral representation of the wave propagator constructed using distorted plane waves, that
is solutions to $ ( D_x^2 + V - \lambda^2 ) e_\pm = 0 $ satisfying
\begin{equation}\label{eq:epm}
e_{\pm}(x,\lambda) =\begin{cases}
T (\lambda)e^{\pm i \lambda x} &  {\text{for }} \ \ \pm x >  R , \\
e^{\pm i \lambda x} +R_\pm (\lambda )e^{\mp i\lambda x}&  {\text{for }} \ \ \pm x < -R.
\end{cases}
\end{equation}
Then, for $ g \in C_{\rm{c}}^\infty (\mathbb R ) $, $ \supp g \subset ( R, \infty ) $, 
\begin{equation*}
\begin{split} 
u ( t, x ) & = \sum_{ j = 1}^N ( \langle g , w_j \rangle \cosh  \mu_j t - \langle g' , w_j \rangle \mu_j^{-1} \sinh \mu_j t )  w_j ( x) 
\\  & + \frac{1}{ 2 \pi } \int_0^\infty \int_{\mathbb R } ( e_+ ( x, \lambda ) \overline{ e_+ ( y, \lambda ) } + 
 e_- ( x, \lambda ) \overline{ e_- ( y, \lambda ) } ) ( \cos t \lambda - \lambda^{-1} \sin t \lambda \partial_y ) g ( - y ) dy d \lambda . 
\end{split}  \end{equation*}
In view of the support properties of $ g $ and the form of $ w_j $ for $ x < - R $ we have
\[  \langle g' , w_j \rangle = \int g' ( x ) a_j^- e^{ x \mu_j } dx = -  \mu_j \int g ( x ) a_j^- e^{ x \mu_j } dx = 
- \mu_j \langle g , w_j \rangle . \]
Hence the contribution of $ w_j $'s to $ u ( t , x ) $ is given by 
\[  \sum_{ j = 1}^N e^{ t \mu_j } \langle g, w_j \rangle w_j ( x ) =  \sum_{ j = 1}^N a_j^\pm  \langle g, w_j \rangle 
e^{  \mu_j  ( t \mp x ) } , \ \  \pm x > R . \] 
To see the contribution of the continuous spectrum, 
we now use the expressions for $  \overline{ e_\pm ( y, \lambda ) } $ 
 for $ y < - R $:
\begin{equation}
\label{eq:y2la} 
 \begin{split}
& E_+ g ( \lambda , t ) := \int_{\mathbb R } \overline{ e_+ ( y, \lambda ) } ( \cos t \lambda - \lambda^{-1} \sin t \lambda \partial_y ) g (- y ) dy =
 \widehat g ( \lambda ) e^{ - i \lambda t } + \overline{ R_+ ( \lambda ) } \widehat g ( - \lambda ) e^{ i \lambda t } ,\\
& E_- g ( \lambda , t ) := \int_{\mathbb R } \overline{ e_- ( y, \lambda ) } ( \cos t \lambda - \lambda^{-1} \sin t \lambda \partial_y ) g ( -y ) dy =
  \overline{ T ( \lambda ) }  \widehat g ( - \lambda ) e^{  i \lambda t }  .
\end{split}
\end{equation} 
Then for $ x < -  R $, 
\[  \begin{split} e_+ ( x, \lambda ) E_+ g ( \lambda , t ) & =  ( e^{i \lambda x} +R_+  (\lambda )e^{- i\lambda x} ) (  \widehat g ( \lambda ) e^{ - i \lambda t } + \overline{ R_+ ( \lambda ) } \widehat g ( - \lambda ) e^{ i \lambda t } ) \\
& = \widehat g ( \lambda ) e^{ i \lambda ( x - t ) } +  |R_+ ( \lambda ) |^2 \widehat g ( -\lambda ) e^{
i \lambda ( t - x ) }  \\ & \ \ \ + R_+ ( \lambda ) e^{ - i \lambda ( x + t) } \widehat g ( \lambda )  +
R_+ ( - \lambda ) e^{ i \lambda ( x + t ) } \widehat g ( - \lambda )  e^{ i \lambda ( t - x ) } , \\
 e_- ( x, \lambda ) E_- g ( \lambda , t ) & = | T ( \lambda ) |^2 \widehat g ( - \lambda ) e^{ - i \lambda ( x - t ) } .
\end{split} \]
Using \eqref{eq:unitTR} in the expression for $ u ( t, x ) $ gives \eqref{eq:scat2wave} for $ x < - R $. 
Similar arguments give the expression for $ x > R $. 
\end{proof}

\subsection{Construction of $ u( x, \omega ) $ in Theorem \ref{t:1}}
\label{s:constr}

To motivate the construction of $ u ( x, \omega )$ we recall one way to do it in the case of \eqref{eq:Schr}.
For $ V \in L^\infty_{\rm{comp}} ( \mathbb R ; [0, \infty ) ) $, $ \supp V \subset ( - R, R ) $, 
 we consider  
the resolvent $ R_V ( \omega ) := ( D_x^2 +V - \omega^2 )^{-1} : L^2 (\mathbb R )  \to L^2 ( \mathbb R ) $ which is holomorphic for $ \Im \omega > 0 $. 
When $ V \not \equiv 0 $ (note we assumed that $ V \geq 0 $),  it is defined $ L^2_{\rm{comp}} ( \mathbb R )  \to L^2_{\rm{loc}} ( \mathbb R ) $ for $ \Im \omega = 0 $. (See \cite[Theorem 2.7]{res}.) 

For $ F $ with $ \supp F \subset ( - R_1, R_1 ) $, $ R < R_1 $, 
\begin{equation}
\label{eq:RV}  R_V ( \omega ) F ( x ) = 
\left\{ \begin{array}{ll}  A_- (\omega )   e^{ - i \omega x }, 
& x < -R_1  , \\ 
A_+ ( \omega )  e^{ i \omega x }  , & x > R_1 , \end{array} \right. \end{equation}
which is the meaning of being outgoing for 1D for compactly supported perturbations. (Here $ A_\pm $ depend on $ F $ and $ \omega $.) To construct a solution of $ ( D_x^2 + V - \omega^2 ) u = 0  $ satisfying \eqref{eq:scat2} we 
choose $ \rho \in C_{\rm{c}}^\infty ( \mathbb R ; [ 0 , 1 ] ) $ supported in $ ( - R_1 , R_1 ) $, $ R_1 > R $, and 
equal to $ 1 $ near the support of $ V $. We then put 
\[  u ( x, \omega ) = ( 1 - \rho ( x ) ) e^{ i \omega x } f ( \omega ) + R_V ( \omega ) [ D_x^2 , \rho ] ( e^{ i \bullet \omega }  f ( \omega ) ) ( x ) ,  \]
Then \eqref{eq:RV} shows that \eqref{eq:scat2} holds. We will mimic this strategy and construct $ R_V ( \omega ) $ now acting on spaces of functions of both $ x $ and $ \omega $. The key is the existence of $ R_V  $
in $ \mathscr H_{\alpha} $ which will be established in \S \ref{s:none}.

We start by recording some simple facts: 

\begin{lemm}
\label{l:1} 
For  $ R_0 ( \omega ) = ( D_x^2 - \omega^2 )^{-1} $, $ \Im \omega > 0 $, the outgoing resolvent, and $0 \leq s\leq 2 $, define 
\begin{equation}
\label{eq:defsR}   \mathscr R_{s} : f ( x, \omega ) \mapsto { \frac{\omega}{(\omega+i)^s}}  R_0 ( \omega )  f ( \bullet, \omega ) ( x ) .
\end{equation}
Then, in the notation of \eqref{eq:defHca},  for $ \rho \in C_{\rm{c}}^\infty ( \RR  ) $, and any $ \alpha \in \mathbb R $, 
\begin{equation}
\label{eq:l1}  
\begin{gathered} 
\| \rho \mathscr R_s \rho \|_{ \mathscr H_{\alpha}  \to \mathscr H^{\blue{s}}_{\alpha}} \leq C , \\
\rho  \mathscr R_s \rho f ( x, \omega )  := \rho ( x ) \omega ( \omega + i )^{-s}  R_0 (\omega ) (  \rho ( \bullet ) f ( \bullet ,\omega ) ) ( x ) .
\end{gathered}
 \end{equation}
\end{lemm}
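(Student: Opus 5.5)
The operator acts fibrewise in $\omega$, so I will reduce the bound to a uniform estimate for each fixed $\omega \in \mathbb C_+$. I will also use the explicit kernel of the free outgoing resolvent,
\[ R_0(\omega) g(x) = \frac{i}{2\omega}\int_{\mathbb R} e^{i\omega|x-y|} g(y)\,dy , \qquad \Im \omega>0 . \]
The plan is to show that for every $\omega = \lambda + i\sigma$ with $\sigma>0$,
\[ \big\| \rho\, \omega(\omega+i)^{-s} R_0(\omega)\rho \big\|_{L^2(\mathbb R)\to H^s(\mathbb R)} \leq C , \]
with $C$ independent of $\omega$. Once this holds, holomorphy in $\omega$ is preserved, and the weighted $\sup_\sigma$ in the definition of $\mathscr H^s_\alpha$ passes through. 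Indeed, squaring, integrating in $\lambda$ and multiplying by $e^{-2\sigma\alpha}$ gives the $\mathscr H_\alpha\to\mathscr H^s_\alpha$ bound for every $\alpha$.

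By interpolation in $s$ it suffices to treat the endpoints $s=0$ and $s=2$. The relevant family is $(\omega+i)^{-s}$ times an $H^s$-valued operator, which can be handled by Stein/complex interpolation in $s$. Alternatively, I can interpolate $H^s$ between $L^2$ and $H^2$ directly, using that the two endpoint bounds hold with the factors $|\omega+i|^{0}$ and $|\omega+i|^{2}$.

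\textbf{Case $s=0$.} The factor $\omega$ cancels the $1/(2\omega)$ in the kernel. The operator $\rho\,\omega R_0(\omega)\rho$ then has kernel $\tfrac{i}{2}\rho(x)e^{i\omega|x-y|}\rho(y)$. Since $|e^{i\omega|x-y|}|\le 1$ for $\Im\omega>0$, this is a Hilbert--Schmidt kernel bounded by $|\rho(x)\rho(y)|$, so the $L^2$ operator norm is bounded uniformly in $\omega$.

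\textbf{Case $s=2$.} I use the equation $(D_x^2-\omega^2)R_0(\omega)=I$. With $v=R_0(\omega)(\rho g)$ we have
\[ D_x^2(\rho v) = \rho\, g\rho + \omega^2\rho v + [D_x^2,\rho]v . \]
Multiplying by $\omega(\omega+i)^{-2}$, the three terms are handled as follows.
\begin{itemize}
\item The first term is bounded by $|\omega|/|\omega+i|^2\le 1$.
\item The second term is $\omega^2(\omega+i)^{-2}$ times $\rho\,\omega R_0\rho g$, which is bounded by the $s=0$ case with a cutoff $\tilde\rho$ equal to $1$ on $\supp\rho$.
\item The commutator term involves $\rho''v$ and $\rho' D_x v$. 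Differentiating the kernel gives $D_x$ applied to $e^{i\omega|x-y|}/(2\omega)$, which produces a factor $\pm\omega/(2\omega)$ and hence a bounded kernel. So $\tilde\rho\, \omega D_x R_0\tilde\rho$ is uniformly bounded on $L^2$, and the extra factor $(\omega+i)^{-2}$ only helps.
\end{itemize}
The remaining $L^2$ part of the $H^2$ norm is the $s=0$ bound times $|\omega+i|^{-2}\le 1$. This gives the $H^2$ estimate.

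The main obstacle I expect is uniformity near $\omega=0$. There $R_0$ itself blows up like $1/\omega$, so it is essential that the prefactor $\omega$ appears in \eqref{eq:defsR}. Every term above must be checked to carry this factor; for example, the $\rho''v$ term is $\omega(\omega+i)^{-2}\rho'' R_0\rho g$, which is fine for the same reason as the $s=0$ case. For large $|\omega|$, the weight $(\omega+i)^{-s}$ is what compensates the $\omega^2$ produced by the equation, and the exponential decay of the kernel is never needed because the cutoffs $\rho$ localise everything.
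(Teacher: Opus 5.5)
Your proof is correct and follows the same route as the paper. The explicit kernel $\frac{i}{2\omega}e^{i\omega|x-y|}$ gives $L^2\to H^s$ bounds for $\rho\,\omega(\omega+i)^{-s}R_0(\omega)\rho$ that are uniform in $\Im\omega>0$, and these pass directly to $\mathscr H_\alpha\to\mathscr H^s_\alpha$; you supply the details (Hilbert--Schmidt at $s=0$, the equation at $s=2$, interpolation) that the paper delegates to a cited free resolvent estimate.

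One small slip: in the commutator term the uniformly bounded operator is $\tilde\rho\, D_xR_0(\omega)\tilde\rho$, whose kernel is $-\tfrac12\,\mathrm{sgn}(x-y)e^{i\omega|x-y|}$ times cutoffs, not $\tilde\rho\,\omega D_xR_0(\omega)\tilde\rho$. Since $|\omega(\omega+i)^{-2}|\le 1$, the conclusion is unaffected.
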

\begin{proof} 
This follows immediately from he definition \eqref{eq:defHca} and the explicit formula for $ R_0 ( \omega  )$ as an operator  $ L^2_{\rm{comp}} 
( \mathbb R) \to H^2_{\rm{loc}} ( \mathbb R ) $:
\[ {\frac{\omega}{(\omega+i)^s}}R_0 ( \omega ) g ( x ) =\frac {i}{ 2{(\omega+i)^s}}  \int_{\mathbb R } e^{ i \omega |x-y| } f ( y ) dy .\]
(See~\cite[Theorem 2.1]{res} for estimates on $\|\rho R_0\rho\|_{L^2\to H^s}$.)
\end{proof}

The next lemma gives a compactness result:
\begin{lemm}
\label{l:2} 
Suppose that $ A_0 :=  a ( x, D_\omega ) ( \omega + i \gamma)^{-1}$, $ \gamma > 0 $,  and the assumptions in \eqref{eq:newave} hold. Then, for any $\alpha \in \mathbb R $, $ A_0 : L^2 ( \mathbb R^2 ) \to L^2 ( \mathbb R^2 ) $ defines a bounded operator
\begin{equation}
\label{eq:A0} 
A_0 : \mathscr H_{\alpha} \to \mathscr H_{\alpha} ,
\end{equation}
and, in the notation of Lemma \ref{l:1}, 
\begin{equation}
\label{eq:l2}
A_0 \mathscr R_{0} \rho : \mathscr H_{\alpha} \to \mathscr H_{\alpha}  \ \text{ is a compact operator.}
\end{equation}
\end{lemm}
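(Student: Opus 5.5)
We plan to work through the Paley--Wiener picture, viewing $\mathscr H_\alpha$ (for each fixed $x$) as the Fourier--Laplace image of $L^2$ functions of time supported in $[-\alpha,\infty)$. With the convention \eqref{eq:FTo2t}, $H_\alpha = \{\widehat h : h\in L^2(\mathbb R_t),\ \supp h\subset[-\alpha,\infty)\}$, and by Plancherel the norm is comparable to $\|h\|_{L^2}$ (the supremum over $\sigma$ being attained as $\sigma\to 0$, up to the harmless weight). In time, $a(x,D_\omega)$ is multiplication by $a(x,t)$ (check the sign/reflection convention in \eqref{e:aAction} against \eqref{eq:FTo2t}), and $(\omega+i\gamma)^{-1}$ is convolution with the causal kernel $i\indic_{[0,\infty)}(t)e^{-\gamma t}$. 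Both preserve the support condition $\supp h\subset[-\alpha,\infty)$: multiplication trivially, and convolution with a kernel supported in $t\ge 0$ because it is causal. We also need both to be bounded on $L^2$, which holds by $\|a\|_{L^\infty}$ and by Young's inequality with $\gamma>0$. This gives \eqref{eq:A0}, with the $x$-variable a passive parameter. Alternatively, one can argue directly in $\omega$: $(\omega+i\gamma)^{-1}$ is holomorphic and bounded on $\Im\omega\ge0$, and convolution in $\Re\omega$ with $\widehat{a(x,\bullet)}$ commutes with the shift $\sigma$, preserving holomorphy; the bound comes from $\sup_x\|\widehat{a(x,\bullet)}\|_{L^1}<\infty$, which follows from $a\in L^\infty_xC^\infty_c$ with uniform bounds on derivatives.

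For the compactness \eqref{eq:l2}, we will factor $A_0\mathscr R_0\rho$. Since $a$ is supported in $|x|\le R$, we may insert a cutoff $\tilde\rho\in C_c^\infty$ equal to $1$ on $[-R,R]$ and write $A_0\mathscr R_0\rho = A_0\tilde\rho\,\mathscr R_0\rho$. Lemma \ref{l:1} with $s=0$ (so that $\mathscr R_0=\omega R_0(\omega)$) gives boundedness of $\tilde\rho\mathscr R_0\rho$ on $\mathscr H_\alpha$. For $s>0$ it gives the gain $\mathscr H_\alpha\to\mathscr H^s_\alpha$ at the cost of the factor $(\omega+i)^{-s}$. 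We therefore propose to write
\[ A_0\mathscr R_0\rho = \big(a(x,D_\omega)(\omega+i\gamma)^{-1}(\omega+i)^{s}\big)\,\tilde\rho\mathscr R_s\rho , \]
taking $s=1$, say. The operator $(\omega+i)/(\omega+i\gamma)$ is bounded and holomorphic in the upper half plane, so the first factor is bounded on $\mathscr H_\alpha$ by the first part. This yields regularity $H^1$ in $x$ on a compact $x$-set, but the $\omega$-direction still lacks compactness.

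The main obstacle is therefore compactness in the frequency/time variable. We will handle it on the time side. In time, the composite first applies the causal, bounded operator coming from $\mathscr R_s$, then multiplies by $a(x,t)$, which is supported in $t\in[-T,T]$, and then applies the causal convolution. Functions coming out of $a\cdot$ are localised in $t\le T$ and, being in $\mathscr H_\alpha$, in $t\ge-\alpha$, so they have compact time support. We will get regularity in $t$ from the factor $(\omega+i)^{-1}$: rather than spending it on $x$-regularity, split the decay, using $(\omega+i)^{-1/2}$ for $H^{1/2}_x$ via Lemma \ref{l:1} and $(\omega+i)^{-1/2}$ for $H^{1/2}_t$ regularity before multiplying by the smooth $a$. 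The function $\chi(x)a(x,t)u$ then lies in $H^{1/2}$ in $t$ with compact $t$-support, and has $x$-support compact. However, mixed regularity in $x$ and $t$ separately may fail to combine, since $a$ is only $L^\infty$ in $x$. To avoid this, we will instead use an approximation argument: approximate $a$ in $L^\infty_x$ norm by finite sums $\sum_j \indic_{I_j}(x)a_j(t)$, and approximate the operator by finite-rank-in-$t$ projections onto compactly supported smooth time profiles. Rellich's theorem in $x$ on $\tilde\rho\mathscr R_s\rho$ (via $H^s_x$ on a compact set) and in $t$ on the $t$-compactly supported $H^{1/2}_t$ output then give compactness of each approximant. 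A norm limit of compact operators is compact, which yields \eqref{eq:l2}.
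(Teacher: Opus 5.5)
Your argument is sound in substance and takes a genuinely different route from the paper. One correction first: in the time-side description you reverse the order of the factors, and once that is fixed the approximation detour becomes unnecessary. Since $A_0=a(x,D_\omega)(\omega+i\gamma)^{-1}$, under \eqref{eq:FTo2t} the causal convolution (kernel $-i\indic_{[0,\infty)}(t)e^{-\gamma t}$) acts first and multiplication by $a(x,t)$ acts \emph{last}. Your displayed factorisation has this right; your sentence ``then multiplies by $a$ \dots\ and then applies the causal convolution'' does not. With $a$ last, take $\chi\in C_c^\infty(\mathbb R^2_{x,t})$ with $\chi\equiv 1$ on $\supp a$ and write $A_0\mathscr R_0\rho=M_a\circ M_\chi\circ(\omega+i\gamma)^{-1}\tilde\rho\mathscr R_0\rho$, where $M_a,M_\chi$ denote multiplication in $(x,t)$. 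Your splitting of the decay shows that $(\omega+i\gamma)^{-1}\tilde\rho\mathscr R_0\rho=\frac{(\omega+i)^{1/2}}{\omega+i\gamma}\tilde\rho\mathscr R_{1/2}\rho$ maps $\mathscr H_\alpha\cong L^2(\mathbb R_x\times[-\alpha,\infty)_t)$ boundedly into functions whose $t$-Fourier transform lies in $\langle\omega\rangle^{-1/2}L^2_\omega H^{1/2}_x$. That space is contained in $H^{1/2}(\mathbb R^2)$ because $\langle\omega\rangle\langle\xi\rangle\geq\langle(\xi,\omega)\rangle$, with $\xi$ dual to $x$. Rellich then makes $M_\chi\circ(\omega+i\gamma)^{-1}\tilde\rho\mathscr R_0\rho$ compact, and composing with the bounded $M_a$, which preserves the support condition $t\geq-\alpha$, keeps it compact. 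This needs only $a\in L^\infty$ with compact support.

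If you keep the approximation anyway, note that uniform approximation by $\sum_j\indic_{I_j}(x)a_j(t)$ with \emph{intervals} $I_j$ is false in general: take $a=\indic_E(x)\chi_0(t)$ with $E$ a fat Cantor set. You would need measurable sets $E_j$ and a finite $\varepsilon$-net of $\{a(x,\bullet)\}$ in $C^0_t$, which Arzel\`a--Ascoli supplies from the uniform $C^1_t$ bounds. The ``finite-rank-in-$t$ projections'' play no role.

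The paper instead works on the horizontal lines $\Im\omega=\sigma$. Boundedness comes from a Schur test for $(\omega+i)^s b(D_\omega)(\omega+i)^{-s}$; this uses the rapid decay of $\widehat{a(x,\bullet)}$, i.e.\ smoothness of $a$ in $t$, and also yields the weighted bound \eqref{e:omegaDecay}. Compactness is then assembled from Rellich on each fixed line, a diagonal subsequence over $\sigma\in\{0,1,2,\dots\}$, Phragm\'en--Lindel\"of in the strips $0<\sigma<j$, and the $\sigma^{-1/2}$ gain from the weight for $\sigma>j$.

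Your Paley--Wiener identification buys several things. The $\mathscr H_\alpha$ norm is exactly $\sqrt{2\pi}$ times the $L^2$ norm of the boundary value, so the diagonal and Phragm\'en--Lindel\"of steps become redundant. Boundedness reduces to the fact that multiplication and causal convolution preserve $L^2$ functions supported in $t\geq-\alpha$. And the lemma turns out to need no $t$-smoothness of $a$. The paper's route, in turn, avoids the vector-valued Paley--Wiener theorem and stays in the line-by-line framework reused in \S\ref{s:none}.

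Your worry about $a$ being only $L^\infty$ in $x$ is well founded. The paper attributes the $x$-regularity of $A_0\mathscr R_0\rho f_n$ to the $\mathscr H^{1/2}_\alpha$ bound on $\rho\mathscr R_{1/2}\rho f_n$, but $a(x,D_\omega)$ does not preserve that regularity. The same remedy is implicitly needed there: insert a cutoff in $D_\omega$ and apply Rellich before the bounded factor $a(x,D_\omega)$.
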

\begin{proof}
To establish \eqref{eq:A0} we need to show that $ a(x, D_\omega) $ extends to an operator on 
$ \mathscr H_{\alpha} $. The holomorphy can be seen by applying the Cauchy Riemann operator and integration by parts 
justified by the rapid decay of $ \tau \mapsto \widehat{ a ( x, \bullet ) } ( \tau ) $ as $ \Re \tau \to \infty $. 

Since $ x \mapsto a ( x, t ) $ is compactly supported and bounded with values in $ C_{\rm{c}}^\infty ( \mathbb R ) $ (and the support in $ t $ is uniformly bounded) it is enough to show that 
for $ b \in C_{\rm{c}}^\infty ( \mathbb R ) $, $ b ( D_\omega) $ extends to an operator on $ \mathscr H_{\alpha} $, defined in 
\eqref{eq:mapping1}. 

We will need slightly more for \eqref{eq:l2}  and with this in mind we show that for any $s\in \mathbb{R}$,  $ (\omega+i)^s b ( D_\omega)(\omega+i)^{-s} $ extends to a bounded operator on $\mathscr{H}_{\alpha}$. 

In fact, recalling \eqref{e:aAction} we have, 
\begin{align*} &[ (\omega+i)^sb ( D_\omega )(\omega+i)^{-s} f ] ( \lambda + i \sigma  ) \\
&\ \ \ \ = \frac{1}{2\pi}\int (\lambda+i\sigma+i)^s\hat b ( \lambda -   \omega' ) (\omega'+i\sigma+i)^{-s}f ( \omega' + i \sigma ) d \omega' , 
\end{align*}
and, since $\hat{b}$ is rapidly decaying,
\begin{align*}
\sup_{\lambda} \int |\lambda +i\sigma +i|^s|\hat{b}(\lambda-\omega)||\omega +i\sigma+i|^{-s}d\omega &\leq C\\
\sup_{\omega} \int |\lambda +i\sigma +i|^s|\hat{b}(\lambda-\omega)||\omega +i\sigma+i|^{-s}d\lambda& \leq C.
\end{align*}
Thus, by the Schur test for boundedness,
\begin{align*} & \sup_{\sigma > 0 } e^{ - 2 \sigma \alpha } \int \left| \frac{1}{2\pi}\int (\lambda+i\sigma+i)^s\hat b ( \lambda - \omega ' ) (\omega'+i\sigma+i)^{-s}f ( \omega' + i \sigma  ) d \omega' \right|^2 d \lambda\\
& \ \ \ \ \ \leq C\sup_{\sigma > 0 } e^{ - 2 \sigma \alpha } \int
| f ( \omega' + i \sigma  ) |^2 d \omega' ,
\end{align*}
that is,  $ (\omega+i)^s b ( D_\omega)(\omega+i)^{-s}  $ is bounded on  $ \mathscr{H}_{\alpha}$. In particular, \eqref{eq:A0} follows.

For compactness, suppose that $\{f_n\}_{n=1}^\infty$ is bounded in $\mathscr{H}_{\alpha}$. Then, using Lemma \ref{l:1}, $\{\rho\mathscr{R}_{s}\rho f_n\}_{n=1}^\infty$ is bounded in $\mathscr{H}_{\alpha}^{s}$ for for $0\leq s\leq 2$. Moreover,  using the equality
$$
(\omega+i)^{\frac{1}{2}}A_0\mathscr{R}_0\rho f_n=(\omega+i)^{\frac{1}{2}}a_0(x,D_\omega)(\omega+i)^{-\frac{1}{2}}(\omega+i)^{}(\omega+i\gamma)^{-1}\rho\mathscr{R}_{\frac{1}{2}}\rho f_n,
$$
together with the facts that
$(\omega+i)^{\frac{1}{2}}a_0(x,D_\omega)(\omega+i)^{-\frac{1}{2}}:\mathscr{H}_{\alpha}\to \mathscr{H}_{\alpha}$ (proved above) and 
$ (\omega+i+i\sigma)/(\omega+i\gamma+i\sigma) | \leq 1+ 1/\gamma $,  $ \omega \in \mathbb R $, $\sigma > 0 $,  we have 
\begin{equation}
\label{e:omegaDecay}\sup_n\|(\omega+i)^{\frac{1}{2}}A_0\mathscr{R}_0\rho f_n\|_{\mathscr{H}_{\alpha}}<\infty
\end{equation}
Fixing $ \sigma $ we want to extract a convergent subsequence of
$ A_0\mathscr{R}_{0}\rho f_{n_{\sigma,k}}(\bullet,\bullet+i\sigma) $ in 
$ L^2_xL^2_\omega$. To apply Rellich's theorem we need improved regularity in $ x $ and $ \omega $ and decay in both. In $ x $ the decay comes from compact support property of the cut-off function $ \rho $, in $ \omega $ from the factor $ (\omega + i)^{\frac{1}{2}} $ in \eqref{e:omegaDecay}. The regularity improvement in $ x $ is a consequence of the boundedness of $\{
\rho\mathscr{R}_{{1}/{2}}\rho f_n,  n \in \mathbb N \} $  in $\mathscr{H}_{\alpha}^{{1}/{2}}$ 
 given in Lemma  \ref{l:1}. Compact support in $ D_\omega $ provides smoothness in $ \omega $

We conclude that for all $ \sigma \geq 0 $ (recall that the singularity
at $ \omega = 0 $ is removed in \eqref{eq:defsR}), there exists a subsequence $ n_{\sigma, k } $ such that
$$
e^{-\sigma \alpha}A_0\mathscr{R}_{0}\rho f_{n_{\sigma,k}}(\bullet,\bullet+i\sigma)\overset{L^2_xL^2_\omega}{\to} e^{-\sigma \alpha}g_\sigma.$$
Now, by a diagonal argument, we may find $n_k$ such that
$$
e^{-\ell \alpha}A_0\mathscr{R}_{0}\rho f_{n_{k}}(\bullet,\bullet+i\ell)\overset{L^2_xL^2_\omega}{\to}  e^{-\ell \alpha} g_\ell
$$
for all $\ell=0,1,\dots$. Since $ \zeta \mapsto A_0 \mathscr R_0 \rho f_{n_k} ( \bullet, \zeta) $ are holomorphic in $\Im \zeta \geq 0 $,
we have by the Phragm\'en--Lindel\"{o}f principle, 
\[ \begin{split}
& \sup_{0<\sigma <j}\|e^{i \alpha (\omega+i \sigma)}[A_0\mathscr{R}_{0}\rho (f_{n_{k}}-f_{n_{k'}})](x,\omega+i\sigma)\|_{L^2_\omega L^2_x }\\
& \ \ \ \ \ \ \ \ \leq \max_{\sigma\in\{0,j\}}\|e^{i \alpha (\omega+i\sigma)}[A_0\mathscr{R}_{0}\rho (f_{n_{k}}-f_{n_{k'}})](x,\omega+i\sigma)\|_{L^2_x L^2_x}.
\end{split}
\]
We now recall \eqref{e:omegaDecay} to see that 
$ \omega \mapsto (\omega+i\sigma +i)^{\frac{1}{2}}e^{i\alpha(\omega+i\sigma)}A_0\mathscr{R}_0\rho f_n ( \omega + i \sigma ) $ is uniformly bounded in $ L^2_\omega L^2_x $ 
and hence 
$$
\sup_{j<\sigma <\infty}\|e^{i(\alpha(\omega+i\sigma)}[A_0\mathscr{R}_{0}\rho (f_{n_{k}}-f_{n_j})](x,\omega+i\sigma)\|_{L^2_\omega L^2_x}\leq C j^{-\frac{1}{2}}
$$
Hence, we obtain a Cauchy sequence in $ \mathscr H_{\alpha} $, 
$
A_0\mathscr{R}_0\rho f_{n_k}\overset{\mathscr{H}_\alpha}{\to} g,
$
which concludes the proof.
\end{proof}

We now solve  $ P v = f $ where $ P $ is given in \eqref{eq:statwave}:
\begin{prop}
\label{p:resolve}
For  $ \alpha \in\mathbb{R}$ and $R_1>R$, let  $ f \in \mathscr H_{\alpha} $ and $ \supp f ( \bullet, \omega ) \subset ( - R_1 , R_1 ) $ for all $\omega\in\mathbb{C}_+$. Then there is a unique $v\in\omega^{-1}\mathscr{H}_{\alpha}$ such that 
$$
(D_x^2-\omega^2+A(x))v(x,\omega)=f(x,\omega).
$$
Moreover, for any 
$ \rho, \rho_1 \in C_{\rm{c}}^\infty ( \mathbb R ; [ 0, 1 ] )  $ such that $ \rho \equiv 1 $ near
 the support of $ \rho_1 $ and $ \rho_1 \equiv  1 $ on a neighbourhood of $ ( - R_1, R_1 ) $,
 $$
v= R_0(\omega)\rho_1(I+A(x)R_0(\omega)\rho)^{-1}\rho_1f.
$$
\end{prop}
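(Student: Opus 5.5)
The plan is to reduce the problem to a Fredholm equation on $\mathscr H_\alpha$ with a compact perturbation, in the spirit of the review in \S\ref{s:revs}. Write $A(x) = A_0\,\omega$ with $A_0 = a(x,D_\omega)(\omega+i\gamma)^{-1}$. Then
\[ A(x)R_0(\omega)\rho = A_0\,\omega R_0(\omega)\rho = A_0\mathscr R_0\rho . \]
By Lemma \ref{l:2}, $K := A(x)R_0(\omega)\rho$ is compact on $\mathscr H_\alpha$. Hence $I+K$ is Fredholm of index zero on $\mathscr H_\alpha$, and it is invertible as soon as it is injective.

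For existence I will use the ansatz $v := R_0(\omega)\rho_1 w$ with $w := (I+K)^{-1}\rho_1 f$. Since $\rho\equiv1$ on $\supp\rho_1$ and $\supp a(\bullet,t)\subset[-R,R]$, we have $A(x)=A(x)\rho_1=\rho_1 A(x)$. It follows that $K w$ is supported in $[-R,R]$ in $x$, and so
\[ w = \rho_1 f - Kw = \rho_1 w . \]
Applying $P$ gives
\[ Pv = \rho_1 w + A R_0\rho_1 w = \rho_1 w + A R_0 \rho\rho_1 w = (I+K)w = \rho_1 f = f , \]
where the last equality uses $\rho_1\equiv1$ on $(-R_1,R_1)$.

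For the mapping property I first note that $w$ is supported in a fixed compact set, since it equals $\rho_1 w$. The explicit kernel $\tfrac{i}{2\omega}e^{i\omega|x-y|}$ shows that $\omega R_0(\omega)\rho_1 w$ lies in $\mathscr H_{\alpha,\mathrm{loc}}$ by Lemma \ref{l:1}. Moreover, outside the support it equals $e^{\pm i\omega x}$ times a function of $\omega$ alone, so $v\in\omega^{-1}\mathscr H_{\alpha}$ locally, with the outgoing form \eqref{eq:RV}. Some care is needed here about the global-in-$x$ meaning of $\omega^{-1}\mathscr H_\alpha$; I expect the intended class is the local one, given that $v$ is outgoing.

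Uniqueness and injectivity both reduce to a single claim: if $v\in\omega^{-1}\mathscr H_{\alpha,\mathrm{loc}}$ is outgoing (of the form $c_\pm(\omega)e^{\pm i\omega x}$ for $\pm x>R$) and $Pv=0$, then $v=0$. Injectivity of $I+K$ follows from this claim. Indeed, if $(I+K)w=0$ then $w=\rho_1 w$ as above, and $v=R_0\rho_1 w$ is outgoing with $Pv=0$. The claim then gives $v=0$, and hence $w=-A R_0\rho w=-Av=0$.

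The main obstacle is the proof of that nonexistence claim, which is deferred to \S\ref{s:none}. My plan for it is to translate back to the time domain. By Paley--Wiener, an element of $\omega^{-1}\mathscr H_\alpha$ corresponds to a distribution in $t$ supported in $t\ge-\alpha$, that is, vanishing for very negative times. Under the convention \eqref{eq:FTo2t}, $Pv=0$ becomes the wave equation \eqref{eq:newave} with zero data in the far past. The uniqueness part of Proposition \ref{p:exu}, via the energy estimate of Lemma \ref{l:energy} with $m=1$ and $m_0=-\infty$, then forces $v\equiv0$. The delicate points are regularity, since $v$ is only in $L^2$-type spaces and a cutoff or duality argument is needed to apply the energy estimate, and the fact that the outgoing condition gives an $x$-unbounded solution. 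For the latter, finite propagation speed together with the $e^{\pm i\omega x}$ structure should show that $v(t,\cdot)$ has compact support for each $t$.
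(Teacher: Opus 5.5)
Your proposal is correct. The Fredholm part and the existence argument coincide with the paper's: $A(x)R_0(\omega)\rho=A_0\mathscr R_0\rho$ is compact by Lemma \ref{l:2}, $v=R_0\rho_1(I+K)^{-1}\rho_1f$, and $w=\rho_1w$ because $Kw$ is supported in $[-R,R]$. (The fact behind $A=\rho_1A=A\rho_1$ is $\rho_1\equiv1$ on $[-R,R]$, not $\rho\equiv1$ on $\supp\rho_1$.) The paper leaves uniqueness implicit in the equivalence given by the factorization $P=(I+AR_0(1-\rho))(I+AR_0\rho)(D_x^2-\omega^2)$. Your reduction of uniqueness and injectivity to ``outgoing solutions of $Pv=0$ vanish'' is exactly the logic the paper uses for uniqueness in Theorem \ref{t:1}.

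The real difference is how that vanishing (Proposition \ref{p:noker}) is proved. The paper stays on lines $\Im\omega=\sigma$. For $u=R_0w$ it shows $\Im\langle P_\sigma u,(\omega+i\sigma)u\rangle_\sigma\le-\tfrac12\sigma^3\|u\|_\sigma^2$ for $\sigma\gg1$, justified with a cutoff $1_{|\omega|\le T}$. So $u=0$ on those lines, and holomorphy gives $u\equiv0$ on $\mathbb C_+$. This is the Plancherel transcription of the energy identity with weight $e^{-2\sigma t}$ and multiplier $\partial_tu$, so your plan is the same estimate in the time domain.

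Routing it through Lemma \ref{l:energy}, however, costs work the paper never needs:
(i) Paley--Wiener to get vanishing for $t\ll0$ and, via $c_\pm$, compact $x$-support at each time;
(ii) identification of $a(x,D_\omega)\omega/(\omega+i\gamma)$ with the memory term;
(iii) most substantively, upgrading $v$ from exponentially weighted $H^1_{\mathrm{loc}}$ in space-time (what Lemma \ref{l:1} gives for $v=R_0w$) to the class $C(H^1)\cap C^1(L^2)$ that Lemma \ref{l:energy} assumes, or else a duality argument with the anti-causal adjoint problem.

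On a line $\Im\omega=\sigma>0$ none of these issues arise: the outgoing form already gives exponential decay in $x$, and Lemma \ref{l:1} supplies enough regularity. Indeed, the cleanest way to make your plan rigorous is to integrate the weighted identity over all of $\mathbb R^2_{t,x}$, with no boundary term at $t=T$, which is precisely the paper's computation. What your route buys is conceptual transparency: injectivity of $I+AR_0\rho$ is just causality plus the energy uniqueness of \S\ref{s:eu}.
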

\begin{proof}
 We will solve solve 
\[  (  D_x^2 - \omega^2 +   A( x ) ) v ( x, \omega ) = f ( x, \omega ),  \]
so that \eqref{eq:RV} holds for $ v ( x, \omega )$ and
\begin{equation}
\label{eq:uHardy}  \rho \in C^\infty_{\rm{c}} ( ( - R , R ); [ 0,1 ] ) \  \Longrightarrow \ 
 \rho v \in \omega^{-1}\mathscr H_{ a } . \end{equation}
 To start
we note that   
\[ ( 1 - \rho ) A = 0 \ \text{ and } \ ( I + A ( x ) R_0 ( \omega ) ( 1 - \rho ) )^{-1} = ( I - A ( x ) R_0 ( \omega ) ( 1 - \rho ) ) . \] 
Hence, 
\[  \begin{split}  P & = D_x^2 - \omega^2 + A ( x ) = ( I + A ( x ) R_0 ( \omega ) )( D_x^2 - \omega ^2) \\
& = 
( I + A ( x ) R_0 ( \omega ) ( 1 - \rho ) )  ( I + A ( x ) R_0 ( \omega ) \rho ) ( D_x^2 - \omega^2 ) ,
\end{split}  \]
and solving $ P u = f = \rho_1 f $ is equivalent to solving
\begin{equation}
\label{eq:u2f1}   ( I + A ( x ) R_0 ( \omega ) \rho ) ( D_x^2 - \omega^2 ) v = ( I + A( x ) R_0 ( \omega ) ( 1 - \rho ) ) \rho_1 f =
\rho_1 f. \end{equation}

To continue, we use the following crucial proposition which will be proved 
in Section \ref{s:none}.
\begin{prop}
\label{p:noker}
For $ A$ given in \eqref{eq:statwave}, 
\begin{equation}
\label{eq:noker}
\ker_{\mathscr H_{\alpha} }  ( I + A ( x ) R_0 ( \omega ) \rho ) = \{ 0 \} . 
\end{equation}
\end{prop}

Lemma \ref{l:2} shows that at $ A ( x ) R_0 ( \omega ) \rho : \mathscr H_{\alpha} \to \mathscr H_{\alpha} $ is compact and hence $ ( I + A ( x ) R_0 ( \omega ) \rho ) :  \mathscr H_{\alpha} \to \mathscr H_{\alpha}  $ is a Fredholm operator of index 0. Proposition~\ref{p:noker} then shows that we have an inverse
\begin{equation}
\label{e:bounded} ( I + A ( x ) R_0 ( \omega ) \rho ) ^{-1} : \mathscr H_{\alpha} \to \mathscr H_{\alpha} .\end{equation}
Going back to \eqref{eq:u2f1} and recalling that $\rho_1 f\in \mathscr{H}_a$ we define
\begin{equation}
\label{eq:f2u2}  v = R_0 ( \omega ) ( I + A ( x ) R_0 ( \omega ) \rho)^{-1} \rho_1 f, 
\end{equation}
and notice that
$$
( I + A ( x ) R_0 ( \omega ) \rho)^{-1} \rho_1 f=-A(x)R_0(\omega)\rho ( I + A ( x ) R_0 ( \omega ) \rho)^{-1} \rho_1 f+\rho_1 f.
$$
Hence, the support properties of $f$ and $A$ imply that 
$$
( I + A ( x ) R_0 ( \omega ) \rho)^{-1} \rho_1 f=\rho_1( I + A ( x ) R_0 ( \omega ) \rho)^{-1} \rho_1 f,
$$
which together with~\eqref{eq:f2u2} completes the proof of the proposition.
\end{proof}

\begin{proof}[Proof of Theorem~\ref{t:1}]
{\bf Existence:} Let $f\in  ( \omega + i )^{-\blue{1}}H_{\alpha}$, $R_1>R$, $\rho,\rho_1\in C_c^\infty(\mathbb{R})$ with $\rho\equiv 1$ near $[-R,R]$, $\supp \rho_1\subset (-R_1,R_1)$ and $\rho_1\equiv 1$ near $\supp \rho$.  

Since $f\in ( \omega + i )^{-\blue{1}} H_{\alpha}$ and $\rho\in C_c^\infty(-R_1,R_1)$, 
$$
F:=f(\omega)[D_x^2,\rho](e^{i\omega x})\in \mathscr{H}_{a+R_1}.
$$
 By Proposition~\ref{p:resolve} there is a unique $v\in \omega^{-1}\mathscr{H}_{a+R_1}$ such that 
$$
(D_x^2-\omega^2+A(x))v=F,
$$
and 
\begin{equation*}
v=R_0(\omega)\rho_1(I+A(x)R_0(\omega)\rho)^{-1}\rho_1 F .
\end{equation*}
Setting $u:= (1-\rho(x))e^{i\omega x}f(\omega)+v$, we have
$$
(D_x^2-\omega^2+A(x))u=0.
$$
Define 
\begin{align}
Tf (\omega)&:= f(\omega)+ \frac{i}{2\omega} \int e^{-iy\omega}\left[(I+AR_0\rho)^{-1}\left( \rho_1f(\omega)[D_{\bullet}^2,\rho](e^{i\omega \bullet})\right) \right](y,\omega)dy,\label{e:T}\\
R_+f(\omega)&:= \frac{i}{2\omega} \int e^{iy\omega} \left[(I+AR_0\rho)^{-1} \left( \rho_1f(\omega)[D_{\bullet}^2,\rho](e^{i\omega \bullet})\right) \right](y,\omega)dy\label{e:R+}.
\end{align}
Then Theorem~\ref{t:1} follows from the form of $R_0(\omega)$, the estimates~\eqref{e:bounded} and~\eqref{eq:l1}, the fact that 
$$\supp \{x\,:\exists \, \omega\in\mathbb{C}_+, (x,\omega)\in \supp (I+AR_0\rho)^{-1}\rho_1 F \}\subset (-R_1,R_1),$$
and that $R_1>R$ is arbitrary.

\noindent {\bf Uniqueness:} It is enough to show that for any $R>0$, $\alpha>0$, any solution, $v\in \omega^{-1}\mathscr{H}_{\alpha}$ to $Pv=0$ with 
$$
v(x,\omega)=\begin{cases} g_-(\omega)e^{-i\omega x},&x<-R\\
g_+(\omega)e^{i\omega x},&x>R
\end{cases}
$$
and $g_\pm \in \omega^{-1}\mathscr{H}_{\alpha}$,  satisfies $v\equiv 0$. 

To see this we recall that $ P v = 0 $ means that
\begin{equation}
\label{eq:A2A}
(D_x^2-\omega^2)v= -A(x)v\in (\omega+i)^{-1}\mathscr{H}_{\alpha} \subset \mathscr H_\alpha
\end{equation}
Therefore, since $v(\omega,\bullet)$ is outgoing and $\supp A(\bullet)\subset (-R,R)$,  
\begin{equation}
\label{e:vForm}
v=-R_0(\omega)A(x)v
\end{equation}
Hence
$$
A(x)v=-A(x)R_0(\omega)A(x)v,
$$
so that for $\rho\in C_c^\infty$ with $\supp (1-\rho)\cap [-R,R]=\emptyset$, 
$$
0=A( x) v + A(x)R_0(\omega)A(x)v=(I+A(x)R_0(\omega)\rho)A(x)v.
$$
In view of the inclusion in \eqref{eq:A2A}, we can now use Proposition~\ref{p:noker} sto see that $A(x)v=0$.
Together with~\eqref{e:vForm}, this implies $v=0$, completing the proof of uniqueness.
\end{proof}

\subsection{Non-existence of purely outgoing solutions}
\label{s:none}
We will now prove Proposition \ref{p:noker}. Suppose that, in the notation of Proposition 
\ref{p:noker}, 
\begin{equation}
\label{eq:wker}   ( I + A ( x ) R_0 ( \omega ) \rho ) w = 0 , \ \ \  w \in \mathscr H_{\alpha} .  
\end{equation}
Since $ \rho w = w $, 
\begin{equation}
\label{eq:uout}
u := R_0 ( \omega ) w = -  R_0 ( \omega )  A ( x ) R_0 ( \omega ) \rho w , 
\ \ \  ( D_x^2 - \omega^2 + A( x ) ) u  = 0 . \end{equation}
Moreover, since 
\begin{equation}
\label{e:whereIsU}R_0(\omega):\mathscr{H}_{\alpha}\to \omega^{-1}(\mathscr{H}_{\alpha}\cap \langle \omega\rangle\mathscr{H}_{\alpha}^1\cap\langle \omega\rangle^2\mathscr{H}_{\alpha}^2)\end{equation}
by Lemma~\ref{l:1}, $ u ( x, \bullet ) \in  \mathscr{O} ( \mathbb C_+ ) $, 
and for $ \sigma > 0 $,
\[ \int_{\mathbb R} | u ( x, \omega + i \sigma ) |^2 d \omega < \infty. \]
We want to show that $ u ( x, \omega + i \sigma ) \equiv 0 $, $ \sigma > 0 $ and for that we use a variant of 
a positive commutator argument. Define
\begin{gather}  \langle u, v \rangle_\sigma := e^{ - 2\sigma \alpha }  \int u ( x, \omega + i \sigma ) \overline{ v ( x, \omega + i \sigma ) } d x d \omega , \nonumber
\\
 P_\sigma := D_x^2 - ( \omega + i \sigma )^2 + a( x, D_\omega ) \frac{ \omega + i \sigma }{
\omega + i \sigma + i \gamma } .\label{e:Psigma} 
\end{gather}
Notice that for $u\in\mathscr{H}^2_\alpha$,  $P_\sigma (u|_{\Im \omega =\sigma})=(Pu)|_{\Im \omega =\sigma}$ where the action of $P$ on $u$ is explained in~\eqref{e:aAction}. 
For $u\in \omega^{-3/2} \mathscr{H}_{\alpha}^2$, we compute  
\begin{equation}
\label{eq:Puu} 
 \begin{split} 
\langle P_\sigma u , ( \omega + i \sigma ) u \rangle_\sigma & = 
\langle ( \omega - i \sigma ) D_x u , D_x u \rangle_\sigma - \langle  ( \omega^2 + \sigma ^2 ) 
( \omega + i \sigma ) u , u \rangle_\sigma \\
& \ \ \ \ \ \ \ \ \ + 
\langle a ( x, D_\omega ) \frac{ \omega + i \sigma}{ \omega + i \sigma + i \gamma } u, 
( \omega + i \sigma ) u \rangle_\sigma .  \end{split} \end{equation}
We now take the imaginary part to obtain 
\[ \begin{split} \Im \langle P_\sigma u, ( \omega + i \sigma ) u \rangle_\sigma & = \, 
- \sigma \big( \| D_x u \|^2_\sigma + \| | \omega + i \sigma | u \|^2_\sigma \big) 
\\
& \ \ \ \ + 
\Im \langle a ( x, D_\omega ) \frac{ \omega + i \sigma}{ \omega + i \sigma + i \gamma } u, 
( \omega + i \sigma ) u \rangle_\sigma  \\
& \leq - \sigma \big( \| D_x u \|^2_\sigma + \| | \omega + i \sigma | u \|^2_\sigma \big) \\
& \ \ \ \  + 
 \| a ( x, D_\omega ) \|_{ \mathscr H_{\alpha} \to \mathscr H_{\alpha} } 
\Big\|  \frac{ \omega + i \sigma}{ \omega + i \sigma + i \gamma } u\Big\|_\sigma \| |\omega + i \sigma |
 u \|_\sigma \\ & \leq 
  - \sigma ( \| D_x u \|^2_\sigma + \| | \omega + i \sigma | u \|^2_\sigma ) + 
  C \| u \|_{\sigma } \| | \omega + i \sigma | u \|_{\sigma } \\
  & \leq - \tfrac12 \sigma^3 \| u \|_{\sigma }^2 , 
  . \end{split}  \]
  if $ \sigma $ is large enough. 

Since $u=R_0(\omega)w$ with $w\in\mathscr{H}_{\alpha}$, we use~\eqref{e:whereIsU} to obtain $u\in \omega^{-1}(\mathscr{H}_\alpha\cap \langle \omega\rangle\mathscr{H}_\alpha^1\cap \langle \omega\rangle^2\mathscr{H}_{\alpha}^2)$. Therefore, to justify multiplication by $ \omega $ and integration as above, we 
consider a modified version of \eqref{eq:Puu}: 
\[ \begin{split}  & \langle P_\sigma u , ( \omega + i \sigma ) \indic_{ |\omega | \leq T }  u \rangle_\sigma = 
\langle  ( \omega - i \sigma ) \indic_{ |\omega | \leq T }  D_x u , D_x 
u \rangle_\sigma  \\
& \ \ \ - \langle \indic_{ |\omega | \leq T }  ( ( \omega^2 + \sigma ^2 ) 
( \omega + i \sigma ) u , u \rangle_\sigma 
+ 
\langle a ( x, D_\omega ) \frac{ \omega + i \sigma}{ \omega + i \sigma + i \gamma } u, 
( \omega + i \sigma ) \indic_{ |\omega | \leq T }  u \rangle_\sigma ,
\end{split} \]
estimate the imaginary part as above to obtain (recall that $P_\sigma u=0$ from~\eqref{eq:uout})
$$
0= \langle P_\sigma u , ( \omega + i \sigma ) \indic_{ |\omega | \leq T }  u \rangle_\sigma \leq -\tfrac{1}{2}\sigma^3\|1_{|\omega|\leq T}u\|_{\sigma}^2.
$$
For each $\sigma$ large enough, sending $T\to \infty$ then implies
$u|_{\Im \omega=\sigma}\equiv 0$. Since $u$ is holomorphic in $\Im \omega>0$, this implies that $u\equiv 0$. 

\section{Proof of Theorem 2}

Let $R_1>R$ and $\rho \in C_c^\infty((-R_1,R_1))$ with $\rho\equiv 1$ near $[-R,R]$, $\rho_1\in C_c^\infty((-R_1,R_1))$ with 
\[ \supp \rho\cap \supp (1-\rho_1)=\emptyset.
 \] 
 We then define \[ u_1(t,x):=(1-\rho(x))g(t-x), \ \   u_2:=u-u_1 . \] 
 It follows that  $ u_2\equiv 0$  for $ t\ll -1$ and that
\[
\begin{aligned} 
(D_t^2-D_x^2-A)u_2 & =-(D_t^2-D_x^2-A)(1-\rho)g(t-x)\\
& =-[D_x^2,\rho]g(t-x)=:f(t,x), 
\end{aligned} 
\]
Proposition~\ref{p:exu} and \eqref{eq:ourB} imply that there is $C>0$ such that $\|u_2\|_{L^2(\mathbb{R}_x)}\leq Ce^{Ct}$ and hence for $\sigma>0$ large enough,
$$
\|\widehat{u}_2(\omega+i\sigma,x)\|_{L^2_{\omega,x}}<\infty,
$$
and there is $\sigma_0>0$ such that $\widehat{u}_2$ is holomorphic in $\{\Im \omega>\sigma_0\}$.
Moreover, (recall the convention~\eqref{eq:FTo2t}) since $f$ is smooth and compactly supported in time, there is $a>0$ such that $\widehat{f}\in \mathscr{H}_{\alpha}$. Thus, for $\sigma>0$ large enough,
$$
P_\sigma \widehat{u}_2(\omega+i\sigma,x)=-\widehat{f}(\omega+i\sigma,x),
$$
where $P_{\sigma}$ is defined in~\eqref{e:Psigma}.
Hence, for $\sigma>0$ large enough,
\begin{equation}
\label{e:ftU2}
\widehat{u}_2(\omega+i\sigma,x)=-\Big(R_0(\omega+i\sigma)\Big[(I+AR_0\rho)^{-1}\rho_1\widehat{f}\,\Big](\omega+i\sigma,\bullet)\Big)(x).
\end{equation}
(See~\eqref{eq:statwave} to~\eqref{e:aAction} for a description of the action of $A$.)
By Lemma~\ref{l:1}, this implies that for $0\leq s\leq 2$,
\begin{equation}
\label{e:uIsHereAgain}
\frac{\omega}{(\omega+i)^s} \widehat{u}_2(\omega,x)\in \mathscr{H}_{\alpha}^s.
\end{equation}
Now,  taking the inverse Fourier transform and using~\eqref{e:ftU2} 
$$
u_2(t,x)=-\int_{\Im z =\sigma}e^{iz t}\Big(R_0(I+AR_0 \rho)^{-1} \rho_1 \widehat{f} \,\Big)(z,x)dz. 
$$
Define 
$$
\Gamma_{r,\pm,\varepsilon}:=\pm r+ i[\varepsilon, \sigma],\qquad \Gamma_{\sigma,r}:=[-r,r]+i\sigma. 
$$
First, using~\eqref{e:uIsHereAgain} to see that $(\omega+i\sigma)\widehat{u}_2(\omega+i\sigma,x)\in L^2_{x,\omega}$, we obtain
\[ \begin{split} 
&  \Big\|\int_{|\omega|>R}e^{-i\omega t+\sigma t}\widehat{u}_2(\omega+i\sigma,x)d\omega\Big\|_{L^2_x}
\\ & \ \ \ \  \ \leq e^{\sigma t}\Big(\int_{|\omega|>r}|\omega+i\sigma|^{-2}d\omega\Big)^{1/2}\|(\omega+i\sigma)\widehat{u}_2(\omega+i\sigma,x)\|_{L^2_{x,\omega}}  \leq  Cr^{-1/2}.
\end{split} \] 

Next, since $(I+AR_0\rho)^{-1}:\mathscr{H}_{\alpha}\to \mathscr{H}_{\alpha}$ is bounded and 
$$
\|R_0(z)\|_{L^2\to L^2}\leq \frac{1}{|z|\Im z},\qquad \Im z>0,
$$
we have
$$ 
\sup_{\Im z>0}\|e^{-a\Im z}|z||\Im z|\big(R_0(I+AR_0\rho)^{-1}\rho_1\widehat{f}\,\big)(z,\bullet)\|_{L^2(\mathbb{R}_x)}<\infty,
$$
and hence
$$
\Big\|\int_{\Gamma_{R,\pm,\varepsilon}}\widehat{u}_2(z, \bullet )dz\Big\|_{L^2_x}\leq Cr^{-1}\log \varepsilon^{-1}.
$$
Deforming the contour and letting $ r \to \infty $, we obtain that for $\varepsilon>0$
$$
u_2(t,x)=-\frac{1}{2\pi}\int_{-\infty}^\infty e^{-i(\omega+i\varepsilon) t}R_0(\omega+i\varepsilon)[(I+AR_0\rho)^{-1}\rho_1\widehat{f}\, ](\omega+i\varepsilon)d\omega.
$$
Now, sending $\varepsilon\to 0^+$, we obtain
\begin{align*}
u_2(t,x)&=-\frac{1}{2\pi}\int_{-\infty}^\infty e^{-i\omega t}\frac{1}{\omega+i0}\omega R_0(\omega)[(I+AR_0\rho)^{-1}\rho_1\widehat{f}\, ](\omega, x)d\omega.
\end{align*}
The integral makes sense as a distributional pairing 
with $ ( \omega + i 0)^{-1} $ since, by Lemma~\ref{l:1}, $\omega R_0(\omega):\mathscr{H}_{a,\comp}\to \mathscr{H}_{a,\loc}$. 

Recalling the definition of $f$, we have
\begin{align*}
\widehat{f}(\omega,x)&=\int_{-\infty}^\infty e^{i\omega s}[D^2,\rho]g(s-x)ds
=[D^2,\rho]\int_{-\infty}^\infty e^{-i\omega (x-s)+i\omega x}g(s-x)ds\\
& =\widehat{g}(\omega)([D^2,\rho]e^{i\omega \bullet})(x).
\end{align*}
Thus,
\begin{align*}
u_2(t,x)
&=\frac{1}{2\pi}\int_{-\infty}^\infty e^{-i\omega t}\frac{1}{\omega+i0}\omega R_0(\omega)(I+AR_0\rho)^{-1}\rho_1\widehat{g}(\omega)([D^2,\rho]e^{i\omega \bullet})d\omega.
\end{align*}
Therefore, using the definition~\eqref{e:R+} for $x<-R_1$, 
\begin{align*}
u_2(t,x)&=\frac{1}{2\pi} \int_{-\infty}^\infty e^{-i\omega(t+x)} [R_+\widehat{g}(\bullet)](\omega)d\omega=[\mathscr{R}_+g](t+x)
\end{align*}
and, using the definition~\eqref{e:T} for $x>R_1$, 
$$
u_2(t,x)=\frac{1}{2\pi} \int_{-\infty}^\infty e^{-i\omega(t-x)} [T\widehat{g}(\bullet)-\widehat{g}(\bullet)](\omega)d\omega\\
=[\mathscr{T}g](t-x)-g(t-x)
$$
Hence,  $u=u_1+u_2$ satisfies
$$
u(t,x)=\begin{cases} [\mathscr{T}g](t-x)&x>R_1,\\ g(t-x)+[\mathscr{R}_+g](t+x)&x<-R_1.\end{cases}
$$
Since $R_1>R$ is arbitrary the Theorem follows.
\qed

\renewcommand{\theequation}{A.\arabic{equation}}
\refstepcounter{section}
\renewcommand{\thesection}{A}
\setcounter{equation}{0}

\section*{Appendix by Zhen Huang and Maciej Zworski}




\subsection{Mathematical setup} 

We consider the following special case of operators defined in \S \ref{s:class}:
\begin{equation}
\label{eq:newave1}  
 D_t^2 u  - a ( x, t ) \mathcal B D_t u - D_x^2 u  = 0 , \ \ \ D_t := \tfrac 1 i \partial_t, \ \ D_x =  \tfrac 1 i \partial_x . 
\end{equation}
where
\begin{equation}
\label{eq:defa}    a ( x, t ) \in L^\infty ( \mathbb R_x , \mathscr S ( \mathbb R_t ) ), \ \ \ \supp a ( \bullet, t ) \subset [ -R , R ] \end{equation} 
and $ \mathcal B $ is a memory term:
\begin{equation}
\label{eq:mem}
\mathcal B v ( t, x ) := \int_{-\infty}^t e^{ - \gamma ( t - t' ) } v ( t', x ) dt' , \ \ \ \gamma > 0 .
\end{equation}
An example in \eqref{eq:defa} in the spirit of \cite{horse} would be 
\[  a ( x, t ) = V ( x ) \exp ( - \alpha  ( t - t_0 ) ^2), \ \ \  \alpha > 0 ,  \ \ \ V ( x ) = A   \indic_{ [ -1, 1 ] } . \]
We also allow any step potential:
\[ V ( x ) = \sum_{ j=1}^J V_j   \indic_{ [ x_j, x_{j+1} ] }, \ \ \  x_1 < x_2 <  \cdots < x_{J+1} . \]
As the initial condition we take
\begin{equation}
\label{eq:defg}
u ( t , x)|_{ t \leq 0 }  = g ( x - t ) , \ \ \  g \in C^\infty_{\rm{c}} ( ( - \infty, - R )  , 
\end{equation}
and in practice we could take
\[ g ( x ) = \exp( - ( x - x_0 )^2 / \sigma ) \exp ( i \lambda ( x - x_0 ) ), \ \ \ 0 < \sigma \ll 1, \ \ \ x_0 \ll - R . \]

We can rewrite \eqref{eq:newave1} as a system
\begin{equation}
\label{eq:sys}
D_t \mathbf u = \begin{pmatrix} 0 & 1 \\
D_x^2 & a ( x, t ) \mathcal B \end{pmatrix} \mathbf u, \ \ \ \mathbf u ( 0 , x ) = 
\begin{pmatrix} g ( x ) \\ i g ' ( x ) \end{pmatrix} , \ \ \ \mathbf u ( t, x ) = \begin{pmatrix} u( t, x ) \\
D_t u ( t, x ) \end{pmatrix} .
\end{equation}

The code can be tested in the special case of no memory effect by setting $ \gamma  = 0 $, $ \alpha = 0 $, and $ A =  -  i V_0 $. Then
\[ \mathcal  B D_t u ( t, x ) = \frac1 i \int_{-\infty}^t \partial_t u (t', x ) dt' = - i u (t , x ) . \]
Hence, for our choice of parameters, 
\[  a ( x, t ) \mathcal B D_t u = - i a ( x, t ) u ( t ,x ) = V_0 \indic_{[-1,1]} ( x ) u ( t, x ) , \]
that is, our equation becomes
\[ D_t^2 u - ( D_x^2 + V ( x ) ) u = 0 , \ \ \ V (x) = V_0 \indic_{[-1,1]} . \]
For this there is a standard code for comparing solutions. 
\begin{algorithm*}
\begin{algorithmic}[1]
\State \textbf{Input:} Initial condition $\mathbf{u}_0 = (u_0, v_0)$, time step $\Delta t$, number of time steps $N$, uniform spatial grid points $x$.
\State \textbf{Output:} Solution $\mathbf{u}_n$ at times $t_n = n\Delta t$ for $n = 1, \ldots, N$

\State Initialize memory term $\boldsymbol{\mathcal{M}}_0$ using trapezoidal rule from $-\infty$ to $0$.
\State \textbf{Startup:} Compute $\mathbf{u}_2$ from $\mathbf{u}_1$ using forward Euler (one step).
\For{$n = 2$ to $N-1$}
    \State Update memory term:
    $$
    \boldsymbol{\mathcal{M}}_n = e^{-\gamma \Delta t} \boldsymbol{\mathcal{M}}_{n-1} + \Delta t v_n.
    $$
    \State Leapfrog update:
    $$
    u_{n+1} = u_{n-1} + 2\mathrm{i} \Delta t \, v_n, \quad v_{n+1} = v_{n-1} + 2\mathrm{i} \Delta t \, (D_x^2 u_n + a(x, t_n) \boldsymbol{\mathcal{M}}_n).
    $$
\EndFor
\end{algorithmic}
\caption{Leapfrog scheme for \eqref{eq:sys} with memory term \eqref{eq:mem}}\label{alg:memory}
\end{algorithm*}

\subsection{Numerical schemes}
Now we aim to write down a numerical scheme for  \eqref{eq:sys}, which we rewrite as 
$$
D_t\mathbf u =L\mathbf u + a(x,t)\int_{-\infty}^t C(t-s)\mathbf u(s)ds,
$$
where $$
L = \begin{pmatrix} 0 & 1 \\ D_x^2 & 0 \end{pmatrix}, 
\quad C(t-s) = \begin{pmatrix}
    0 & 0 \\ 0 & e^{-\gamma(t-s)} \end{pmatrix} .
$$
Let $\mathbf u_n(x) = \mathbf u ( t_n, x )$, where $t_n = n\Delta t$ and $\Delta t$ is the time step. Due to the simple exponential form of $C(t)$, we use recurrence to update the memory term:
$$
\boldsymbol{\mathcal M}_n: = \int_{-\infty}^{t_n} C(t_n-s)\mathbf u(s)ds = e^{-\gamma \Delta t}\int_{-\infty}^{t_{n-1}} C(t_{n-1}-s)\mathbf u(s)ds + \int_{t_{n-1}}^{t_n} C(t_n-s)\mathbf u(s)ds.
$$
For $n=0$, we evaluate $\boldsymbol{\mathcal M}_0  = \int_{-\infty}^{0} C(t_n-s)\mathbf u(s)ds$ using the trapezoidal rule. For $n\geq 1$, we have the approximate recurrence relation:
$$
\boldsymbol{\mathcal M}_{n} \approx e^{-\gamma \Delta t} \boldsymbol{\mathcal M}_{n-1} + \Delta t C(0) \, \mathbf u_n,
$$
where $\mathbf u_n$ is updated using the following leapfrog scheme: for $\mathbf u_n = (u_n, v_n)^T$,
\begin{align}
    u_{n+1} &= u_{n-1} + 2\mathrm i \Delta t \, v_n, \label{eq:lf_u}\\
    v_{n+1} &= v_{n-1} + 2\mathrm i \Delta t \, (D_x^2 u_n + a(x, t_n) \boldsymbol{\mathcal M}_n), \label{eq:lf_v}
\end{align}
where the memory term $\boldsymbol{\mathcal M}_n$ is updated via the recurrence relation at each step. A one-step startup procedure using the forward Euler scheme is required to compute $\mathbf u_1$ from $\mathbf u_0$:
\begin{align}
    u_1 &= u_0 + \mathrm i \Delta t \, v_0, \\
    v_1 &= v_0 + \mathrm i \Delta t \, (D_x^2 u_0 + a(x, t_0) \boldsymbol{\mathcal M}_0).
\end{align}
The leapfrog scheme is explicit and has second-order accuracy in time. The overall algorithm is in Algorithm~\ref{alg:memory}.

\subsection{Matlab codes}

This code produces the movie and the graphs shown in Figure~\ref{f:1}. 

\begin{Verbatim}[numbers=left]
function wave_memory(gamma,alpha,V0,x0,lambda)
% Computes a toy wave-like PDE with memory
% Usage: wave_memory(gamma,alpha,V0,x0,lambda)
% If arguments are omitted, reasonable defaults are used.
%
% The code evolves three parameter cases in parallel (three gammas/alphas).
% Subfunction Vnew(x,V0,x0) returns piecewise-constant potential.
% --- Defaults and input sanitizing ---
if (nargin < 1 ) gamma = [0,3,4]; end
if (nargin < 2 ) alpha = [0,2,10]; end
if (nargin < 3 ) V0 = 10*[40,-5,30]; end
if (nargin < 4 ) x0 = [-0.5,-0.25,0.25,0.5]; end
if (nargin < 5 ) lambda = 10; end
% Ensure alpha and gamma have length at least 3 (pad or truncate)
if numel(alpha) < 3, alpha = [alpha(:).' zeros(1,3-numel(alpha))]; end
if numel(gamma) < 3, gamma = [gamma(:).' zeros(1,3-numel(gamma))]; end
% --- Spatial / temporal grid ---
L = pi;         % domain [-L, L]
T = 3.65;        % final time
dx = 0.01;
dt = 0.005;     % dt should be smaller than dx for stability here
x = -L:dx:L;
t = 0:dt:T;
Nx = length(x);
Nt = length(t);
% Memory parameter and potential
t0 = 2;
V = Vnew(x,V0,x0);
% --- Initial condition (rename centre to avoid colliding with x0 input) ---
y0 = -2.0;
sigma = 0.05;
g_func = @(x1) exp(-((x1 - y0).^2) / sigma) .* exp(1i * lambda * (x1 - y0));
% derivative: ( -2*(x-y0)/sigma + i*lambda ) * g
g_prime = @(x1) ( -2*(x1 - y0)/sigma + 1i*lambda ) .* g_func(x1);
% Preallocate arrays for three cases
u1 = zeros(Nt, Nx); v1 = zeros(Nt, Nx);
u2 = zeros(Nt, Nx); v2 = zeros(Nt, Nx);
u3 = zeros(Nt, Nx); v3 = zeros(Nt, Nx);
u1(1,:) = g_func(x);   v1(1,:) = 1i*g_prime(x);
u2(1,:) = g_func(x);   v2(1,:) = 1i*g_prime(x);
u3(1,:) = g_func(x);   v3(1,:) = 1i*g_prime(x);
% Laplacian (second-order central difference), Dirichlet at boundaries
e = ones(Nx,1);
Dxx = -spdiags([e -2*e e], -1:1, Nx, Nx) / dx^2;
Dxx(1,:) = 0; Dxx(end,:) = 0;
% --- Initialize memory B (discrete convolution with g') ---
s = 0:dt:10;  % integration variable s = x - t' in your approximation
ds = dt;
ws1 = exp(-gamma(1)* s);
ws2 = exp(-gamma(2)* s);
ws3 = exp(-gamma(3)* s);
B1 = zeros(1, Nx); B2 = B1; B3 = B1;
for j = 1:Nx
    xj = x(j);
    gprime_vals = g_prime( xj + s );  % vector
    B1(j) = sum(ws1 .* gprime_vals) * ds;
    B2(j) = sum(ws2 .* gprime_vals) * ds;
    B3(j) = sum(ws3 .* gprime_vals) * ds;
end
% --- One-step startup (forward Euler) ---
axt1 = 1i * V .* exp(-alpha(1)*(t(1)-t0)^2);
axt2 = 1i * V .* exp(-alpha(2)*(t(1)-t0)^2);
axt3 = 1i * V .* exp(-alpha(3)*(t(1)-t0)^2);
% Step from n=1 to n=2
u1(2,:) = u1(1,:) + 1i * dt * v1(1,:);
v1(2,:) = v1(1,:) + 1i * dt * ((Dxx * (u1(1,:).')).' + axt1 .* B1);
u2(2,:) = u2(1,:) + 1i * dt * v2(1,:);
v2(2,:) = v2(1,:) + 1i * dt * ((Dxx * (u2(1,:).')).' + axt2 .* B2);
u3(2,:) = u3(1,:) + 1i * dt * v3(1,:);
v3(2,:) = v3(1,:) + 1i * dt * ((Dxx * (u3(1,:).')).' + axt3 .* B3);
% --- Video writer and figure setup ---
filename = 'wave_multi_one.mp4';
writerObj = VideoWriter(filename,'MPEG-4');
writerObj.FrameRate = 40;
open(writerObj);
fig = figure;
fig.Position = [100 100 800 600];
tl = tiledlayout(fig, 1, 1);
ax1 = nexttile(tl, 1);
% --- Time stepping (leapfrog) ---
for n = 2:Nt-1
    tn = t(n);
    % update memory B[v] (exponential recurrence)
    B1 = exp(-gamma(1)*dt)*B1  + dt * v1(n,:);
    B2 = exp(-gamma(2)*dt)*B2  + dt * v2(n,:);
    B3 = exp(-gamma(3)*dt)*B3  + dt * v3(n,:);
    axt1 = 1i * V .* exp(-alpha(1)*(tn - t0)^2);
    axt2 = 1i * V .* exp(-alpha(2)*(tn - t0)^2);
    axt3 = 1i * V .* exp(-alpha(3)*(tn - t0)^2);
    % leapfrog updates
    u1(n+1,:) = u1(n-1,:) + 2i * dt * v1(n,:);
    v1(n+1,:) = v1(n-1,:) + 2i * dt * ((Dxx * (u1(n,:).')).' + axt1 .* B1);
    u2(n+1,:) = u2(n-1,:) + 2i * dt * v2(n,:);
    v2(n+1,:) = v2(n-1,:) + 2i * dt * ((Dxx * (u2(n,:).')).' + axt2 .* B2);
    u3(n+1,:) = u3(n-1,:) + 2i * dt * v3(n,:);
    v3(n+1,:) = v3(n-1,:) + 2i * dt * ((Dxx * (u3(n,:).')).' + axt3 .* B3);
    % normalize potential for plotting (avoid overwriting input V0)
    Vmax = max(abs(V));
    if Vmax == 0, W = V; else W = V / Vmax; end
    % Plot case 1
    axes(ax1); cla(ax1);
plot(ax1, x, real(u1(n,:)), 'c',x,real(u2(n,:)), ...
'r',x,real(u3(n,:)),'b', x, W, 'k', 'LineWidth', 2);
    xlabel(ax1, '$x$','Interpreter','latex','FontSize',20);
    axis(ax1,[min(x) max(x) -1.5 2]);
legend(ax1, ...
    sprintf('$\\alpha=%.1f,\\ \\gamma=%.1f$', alpha(1), gamma(1)), ...
    sprintf('$\\alpha=%.1f,\\ \\gamma=%.1f$', alpha(2), gamma(2)), ...
    sprintf('$\\alpha=%.1f,\\ \\gamma=%.1f$', alpha(3), gamma(3)), ...
    'Interpreter','latex','FontSize',20,'Location','southeast');
title(ax1, ...
      sprintf('${\\rm{Re}} u(x,t),\\ t=%.2f\\quad V(x)/\\max|V|,\\ \\max|V|=%.2f$', ...
            t(n), Vmax), ...
    'Interpreter','latex', ...
    'FontSize', 20);
    drawnow
    F = getframe(gcf);
    writeVideo(writerObj, F);
end
close(writerObj);
end
function V = Vnew(x, V0, x0)
% V(x) = V0(m) for x0(m) < x <= x0(m+1)
if nargin < 2
    V0 = [10 2 20];
end
if nargin < 3
    x0 = [-2 -1 1 2]/5;
end
% number of intervals should satisfy length(x0) = length(V0)+1
if length(x0) ~= length(V0)+1
    error('x0 must have length length(V0)+1');
end
V = zeros(size(x));
for j = 1:length(V0)
    idx = (x > x0(j)) & (x <= x0(j+1));
    V(idx) = V0(j);
end
end
\end{Verbatim}

\end{document}